\newfont{\indic}{bbmss12}
\def\ind#1{\hbox{{\indic 1}$_{#1}$}}
\newcommand{\id}{\mathrm{id}}
\newcommand{\R}{\mathbb R}
\newcommand{\N}{\mathbb N}
\newcommand{\E}{\mathbb E}
\newcommand{\CA}{\mathcal A}
\newcommand{\CC}{\mathcal C}
\newcommand{\CX}{\mathcal X}
\newcommand{\CP}{\mathcal P}
\newcommand{\T}{\mathcal T}
\newcommand{\F}{\mathcal F}
\newcommand{\M}{{\mathcal M}}
\def\MS{\mathscr{M}}
\def\Wick#1{\,\colon\! #1 \colon}
\def\Cum{\mathbf{E}_c}
\DeclareMathOperator{\ev}{ev}
\newcommand{\tRoot}{\Forest{[]}}
\newcommand{\tI}[1]{\Forest{[ [#1]]}}
\newcommand{\tII}[2]{\Forest{[ [#1[#2]] ]}}
\newcommand{\tV}[2]{\Forest{[ [#1][#2] ]}}
\newcommand{\tIV}[3]{\Forest{[ [#1[#2][#3]] ]}}
\newtheorem{theorem}{Theorem}[section]
\newtheorem{lemma}[theorem]{Lemma}
\newtheorem{proposition}[theorem]{Proposition}
\newtheorem{remark}[theorem]{Remark}
\newtheorem{definition}[theorem]{Definition}
\newtheorem{example}[theorem]{Example}
\title[Hopf-algebraic deformations of products]{Hopf-algebraic deformations of products\\ and Wick polynomials}
\author[K.~Ebrahimi-Fard]{K.~Ebrahimi-Fard}
\address{Dept.~of Mathematical Sciences, 
		Norwegian University of Science and Technology (NTNU),
		7491 Trondheim, Norway. 
		{\tiny{On leave from UHA, Mulhouse, France.}}}
         \email{kurusch.ebrahimi-fard@ntnu.no}         
         \urladdr{https://folk.ntnu.no/kurusche/}
\author[F.~Patras]{F.~Patras}
\address{Univ.~C\^ote d'Azur, CNRS,
         		UMR 7351,
         		Parc Valrose,
         		06108 Nice Cedex 02, France.}
\email{patras@unice.fr}
\urladdr{www-math.unice.fr/$\sim$patras}
\author[N.~Tapia]{N.~Tapia}
\address{Departamento de Ingenier\'ia Matem\'atica, 
		Universidad de Chile, 
		Santiago, Chile.}
\email{ntapia@dim.uchile.cl}
\urladdr{http://www.dim.uchile.cl/$\sim$ntapia}
\author[L.~Zambotti]{L.~Zambotti}
\address{Laboratoire de Probabilit\'es, Statistique et Mod\'elisation (UMR 8001), Sorbonne Universit\'e, 
		Paris, France}       
         \email{lorenzo.zambotti@upmc.fr}
         \urladdr{http://www.lpsm.paris/pageperso/zambotti}
\date{\today}
\begin{document}
%%%%%%%%%%%%%%%%%%%%%%%%%%%%%%%%%
%%%%%%%%%%%%%%%%%%%%%%%%%%%%%%%%

\begin{abstract}
We present an approach to cumulant--moment relations 
and Wick polynomials based on extensive use of convolution products of linear functionals 
on a coalgebra. This allows, in particular, to understand the construction of Wick polynomials as the 
result of a Hopf algebra deformation under the action of linear automorphisms induced by 
multivariate moments associated to an arbitrary family of random variables with moments of all 
orders. We also generalize the notion of deformed product in order to discuss how these ideas
appear in the recent theory of regularity structures.
\end{abstract}

\maketitle

%%%%%%%%%%%%%%%%%%%%%%%%%%%%%%%%%
%%%%%%%%%%%%%%%%%%%%%%%%%%%%%%%%%

\noindent {\footnotesize{\bf Keywords}: cumulant--moment relations; Wick polynomials; Hopf algebras; convolution products; regularity structures.}

\smallskip
\noindent {\footnotesize{\bf MSC Classification}: (Primary) 16T05; 16T15; 60C05; (Secondary) 16T30; 60H30.}

%%%%%%%%%%%%%%%%%%%%%%%%%%%%%%%%%%%
%%%%%%%%%%%%%%%%%%%%%%%%%%%%%%%%%%%

%%%%%%%%%%%%%%%%%%%%%%%%%%%%%%%%%
%%%%%%%%%%%%%%%%%%%%%%%%%%%%%%%%%

\section{Introduction}
\label{sec:intro}

Chaos expansions and Wick products have notoriously been thought of as key steps in the renormalization process in perturbative quantum field theory (QFT). The technical reason for this is that they allow to remove contributions to amplitudes (say, probability transitions between two physical states) that come from so-called diagonal terms -- from which divergences in the calculation of those amplitudes may originate. Rota and Wallstrom \cite{RoWa} addressed these issues from a strictly combinatorial point of view using, in particular, the structure of the lattice of set partitions. These are the same techniques that are currently used intensively in the approach by Peccati and Taqqu in the context of Wiener chaos and related phenomena. We refer to their book \cite{Pec-Taq} for a detailed study and the classical results on the subject, as well as for a comprehensive bibliography and historical survey.

Recently, the interest in the fine structure of cumulants and Wick products for non-Gaussian variables has been revived, since they both play important roles in M.~Hairer's theory of regularity structures \cite{reg}. See, for instance, references \cite{Hai-Cha,Hai-She}. The progress in these works relies essentially on describing the underlying algebraic structures in a transparent way. Indeed, the combinatorial complexity of the corresponding renormalization process requires the introduction of group-theoretical methods such as, for instance, renormalization group actions and comodule Hopf algebra structures \cite{bhz}. Another reference of interest on generalized Wick polynomials in view of the forthcoming developments is the recent paper \cite{Luk-Marc}. 

Starting from these remarks, in this article we shall discuss algebraic constructions related to moment--cumulant relations as well as Wick products, using Hopf algebra techniques. A key observation, that seems to be new in spite of being elementary and powerful, relates to the interpretation of multivariate moments of a family of random variables as a linear form on a suitable Hopf algebra. It turns out that the operation of convolution with this linear form happens to encode much of the theory of Wick products and polynomials. This approach enlightens the classical theory, as various structure theorems in the theory of chaos expansions follow immediately from elementary Hopf algebraic constructions, and therefore are given by the latter a group-theoretical meaning. Our methods should be compared with the combinatorial approach in \cite{Pec-Taq}. %On the other hand, we show a natural relation with the results and techniques that have been developed in the theory of regularity structures.

Our approach has been partially motivated by similarities with methods that have been developed for bosonic and fermionic Fock spaces by C.~Brouder et al.~\cite{BruFra,brouderpat} to deal with interacting fields and non-trivial vacua in perturbative QFT. This is not surprising since, whereas the combinatorics of Gaussian families is reflected in the computation of averages of creation and annihilation operators over the vacuum in QFT, combinatorial properties of non-Gaussian families correspond instead to averages over non-trivial vacua.

The main idea of this paper is that the coproduct of a bialgebra allows to deform the product and that this permits to encode interesting constructions such as generalized Wick polynomials. In the last sections of this paper, we show how the above ideas can be used in more general contexts, which include regularity structures. Regarding the latter, we mention that these ideas have been used and greatly expanded in a series of recent papers \cite{reg,bhz,Hai-Cha} on renormalization of regularity structures. These papers handle products of {\it random} distributions which can be ill-defined and need to be {\it renormalized}. The procedure is rather delicate since the renormalization, which we rather call {\it deformation} in this paper, must preserve other algebraic and analytical structures. Without explaining in detail the rather complex constructions appearing in \cite{reg,bhz,Hai-Cha}, we describe how one can formalize this deformed (renormalized) product of distributions by means of a comodule structure.

%%%%%%%%%%%%%%%%%%%%%%%%%%%%%%%%%

\subsection{Generalized Wick polynomials}
\label{ssect:mainresults}

The main results of the first part of this paper (Theorems \ref{deformation} and \ref{ww}) are multivariate generalizations of the following statements for single real-valued random variable $X$ with finite moments of all orders.

We denote by $H:=\R[x]$ the algebra of polynomials in the variable $x$, endowed with the standard product
\begin{equation}\label{xn}
	x^n\cdot x^m:=x^{n+m}, 
\end{equation}
for $n,m\geq 0$. We equip $H$ with the cocommutative coproduct $\Delta: H\to H\otimes H$ defined by
\begin{equation}\label{deltan}
	\Delta x^n := \sum_{k=0}^n \binom{n}{k}x^{n-k}\otimes x^k.
\end{equation}
Product \eqref{xn} and coproduct \eqref{deltan} together define a connected graded commutative and cocommutative bialgebra, and therefore a Hopf algebra structure on $H$. On the dual space $H^*$ a dual product $\alpha \star \beta \in H^*$ can be defined in terms of \eqref{deltan}
\begin{equation}\label{convolproduct}
	(\alpha\star\beta)(x^n):=(\alpha\otimes\beta)\Delta x^n,
\end{equation}
for $\alpha,\beta\in H^*$. This product is commutative and associative, and the space ${\mathcal G}(H):=\{\lambda\in H^*: \lambda(1)=1\}$ forms a group for this multiplication law.

We define the functional $\mu\in H^*$ given by $\mu(x^n):=\mu_n=\E(X^n)$. Then $\mu\in
{\mathcal G}(H)$ and therefore its inverse $\mu^{-1}$ in ${\mathcal G}(H)$ is well 
defined. 

\begin{theorem}[Wick polynomials]
We define $W:=\mu^{-1}\star\id:H\to H$, i.e., the linear operator such that
\begin{equation}
\label{W(xn)}
	W(x^n)=(\mu^{-1}\otimes\id)\Delta x^n
	=\sum_{k=0}^n \binom{n}{k}\mu^{-1}(x^{n-k})\, x^k.
\end{equation}
Then 
\begin{itemize}
\item $W:H\to H$ is the only linear operator such that
\begin{equation}
\label{hermite}
	W(1)=1, 
	\qquad 
	\frac{\rm d}{{\rm d} x} \circ W = W \circ \frac{\rm d}{{\rm d} x}, 
	\qquad 
	\mu(W(x^n))=0,
\end{equation}
for all $n\geq 1$.
\item $W:H\to H$ is the only linear operator such that for all $n\geq 0$
\[
	x^n = (\mu\otimes W)\Delta x^n 
	= \sum_{k=0}^n \binom{n}{k}\mu(x^{n-k})\, W(x^k).
\] 
\end{itemize}
\end{theorem}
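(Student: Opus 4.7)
The guiding principle is that $W = \mu^{-1} \star \id$ is a convolution inverse to $\mu$ acting on $H$, so most claims reduce to formal identities in the convolution algebra together with the counit law $(\varepsilon \otimes \id)\Delta = \id$, where $\varepsilon$ denotes the counit and at the same time the unit of the group $\mathcal G(H)$. Throughout I would work with the natural extension of $\star$ that pairs a linear form with a linear endomorphism via $(\alpha \star f)(h) := (\alpha \otimes f)\Delta h$; coassociativity of $\Delta$ makes this compatible with $\star$ on $H^*$, so expressions like $\mu \star \mu^{-1} \star \id$ are unambiguous.

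I would start with the second bullet, which is a one-line computation: by associativity and $\mu \star \mu^{-1} = \varepsilon$,
\[
  \mu \star W \;=\; \mu \star (\mu^{-1} \star \id) \;=\; (\mu \star \mu^{-1}) \star \id \;=\; \varepsilon \star \id \;=\; \id,
\]
which is the displayed identity when evaluated on $x^n$. Uniqueness is immediate: if $\mu \star \tilde W = \id$, convolving on the left by $\mu^{-1}$ forces $\tilde W = \mu^{-1} \star \id = W$.

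For the first bullet, existence is checked property by property. The normalisation $W(1) = 1$ reduces to $\mu^{-1}(1) = 1$, which follows from $\mu(1) = 1$ and $\mu^{-1} \star \mu = \varepsilon$. The moment-vanishing condition unpacks to $\mu \circ W = \mu^{-1} \star \mu = \varepsilon$ as functionals, so $\mu(W(x^n)) = 0$ for $n \geq 1$. Translation invariance reduces to the coderivation identity
\[
  \Delta \circ \tfrac{d}{dx} \;=\; (\id \otimes \tfrac{d}{dx}) \circ \Delta,
\]
a direct check on monomials using \eqref{deltan}; applying $(\mu^{-1} \otimes \id)$ to both sides gives $W \circ \tfrac{d}{dx} = \tfrac{d}{dx} \circ W$.

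For uniqueness under \eqref{hermite}, given any such $\tilde W$ I would write $\tilde W(x^n) = \sum_{k=0}^n c_{n,k} x^k$. Commutation with $\tfrac{d}{dx}$ yields the recursion $k\, c_{n,k} = n\, c_{n-1,k-1}$, which telescopes to $c_{n,k} = \binom{n}{k}\, c_{n-k,0}$. Defining $a \in H^*$ by $a(x^m) := c_{m,0}$, the normalisation gives $a(1) = 1$ and the vanishing-moment condition becomes $(a \star \mu)(x^n) = \varepsilon(x^n)$ for all $n \geq 0$, forcing $a = \mu^{-1}$ in $\mathcal G(H)$ and hence $\tilde W = W$. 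The only genuine computation anywhere is the coderivation identity relating $\Delta$ and $\tfrac{d}{dx}$; everything else is formal manipulation with associativity of $\star$, the counit, and invertibility in $\mathcal G(H)$, so the only real obstacle is the small bookkeeping that turns the three scalar conditions on $\tilde W$ back into a single identity in the convolution algebra.
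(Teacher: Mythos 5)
Your argument is correct and is essentially the paper's own approach: the statement is the single-variable case of Theorems \ref{wick:repr} and \ref{other}, which are proved by exactly these convolution-algebra identities ($\id=\mu\star W$ with uniqueness from invertibility of $\mu$ in ${\mathcal G}(H)$, $\mu\circ W=\mu^{-1}\star\mu=\varepsilon$, and commutation with the derivative, which the paper gets by writing $\frac{\rm d}{{\rm d}x}=\zeta\star\id$ for the infinitesimal character $\zeta(x^n)=\delta_{n,1}$ and using commutativity of $\star$ — the same content as your coderivation identity — while uniqueness under \eqref{hermite} is the same triangular recursion you carry out explicitly). One small bookkeeping point: in that uniqueness step you tacitly write $\tilde W(x^n)=\sum_{k=0}^n c_{n,k}x^k$, i.e.\ assume degree at most $n$; this is harmless because the same recursion $k\,c_{n,k}=n\,c_{n-1,k-1}$ applied to $k>n$ terminates at $c_{0,k-n}=0$ (forced by $\tilde W(1)=1$), so the higher coefficients vanish automatically and the telescoping is justified.
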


We call $W(x^n) \in H$ the Wick polynomial of degree $n$ associated to the law of $X$. If $X$ is a standard Gaussian random variable then the recurrence \eqref{hermite} shows that $W(x^n)$ is the Hermite polynomial $H_n$. Therefore \eqref{W(xn)} gives an explicit formula for such generalized Wick polynomials in terms of the inverse $\mu^{-1}$ of the linear functional $\mu$ in the group ${\mathcal G}(H)$. 

The Wick polynomial $W$ permits to define a {\it deformation} of the Hopf algebra $H$. 

\begin{theorem}\label{1.2}
The linear operator $W:H\to H$ has a composition inverse $W^{-1}:H\to H$ given by $W^{-1}=\mu\star\id$. If we define for $n,m\geq 0$ the product
\[
	x^n\cdot_\mu x^m := W ( W^{-1}(x^n)\cdot W^{-1}(x^m) ),
\]
and define similarly a twisted coproduct $\Delta_\mu$, then $H$ endowed with $\cdot_\mu$,\ $\Delta_\mu$ and $\varepsilon_\mu:=\mu$ is a bicommutative Hopf algebra. The map $W$ becomes an isomorphism of Hopf algebras. In particular
\[
	W(x^{n_1+\cdots+n_k})
	=W(x^{n_1})\cdot_\mu W(x^{n_2})\cdot_\mu\cdots\ \cdot_\mu W(x^{n_k}),
\]
for all $n_1,\ldots,n_k\in\N$.
\end{theorem}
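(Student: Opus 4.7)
My plan is to prove the theorem by transport of structure along the linear bijection $W$. Once the first assertion $W^{-1}=\mu\star\id$ is established, the deformed operations can be defined as the images of the original Hopf algebra operations under $W$, and $W$ becomes tautologically a Hopf algebra isomorphism; the remaining content is the verification that $\varepsilon_\mu=\mu$ and the derivation of the multiplicativity formula, both of which are straightforward consequences.

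For the first assertion I would use the compatibility between convolution in $H^*$ and composition in $\mathrm{End}(H)$ given by the map $\alpha\mapsto L_\alpha:=\alpha\star\id=(\alpha\otimes\id)\Delta$. A short Sweedler-notation calculation using coassociativity gives $L_\alpha\circ L_\beta = L_{\beta\star\alpha}$, and since $H$ is cocommutative the product $\star$ on $H^*$ is commutative, so $L_\alpha\circ L_\beta = L_{\alpha\star\beta}$. Specialising to $\alpha=\mu$, $\beta=\mu^{-1}$ and using the counit identity $L_\varepsilon=\id$, one obtains $(\mu\star\id)\circ W = L_\mu\circ L_{\mu^{-1}} = L_\varepsilon = \id$, hence $W^{-1}=\mu\star\id$.

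Next I would set
\[ \Delta_\mu := (W\otimes W)\circ\Delta\circ W^{-1},\qquad \eta_\mu := W\circ\eta,\qquad S_\mu := W\circ S\circ W^{-1}, \]
with $\cdot_\mu$ and $\varepsilon_\mu$ as in the statement; all of these are the natural transports of the corresponding structure maps along $W$. Every Hopf algebra axiom is an equality of linear maps that is preserved by conjugation with $W$ or $W^{\otimes 2}$, so the axioms transfer automatically, and commutativity together with cocommutativity are preserved by the same mechanism; $W$ is then a Hopf algebra isomorphism by construction. To match the statement, a direct computation using $W^{-1}=\mu\star\id$ and $\varepsilon(x^k)=\delta_{k,0}$ yields
\[ \varepsilon\circ W^{-1}(x^n) \;=\; \sum_{k=0}^n \binom{n}{k}\,\mu(x^{n-k})\,\varepsilon(x^k) \;=\; \mu(x^n), \]
consistent with the assignment $\varepsilon_\mu=\mu$; and the multiplicativity formula $W(x^{n_1+\cdots+n_k})=W(x^{n_1})\cdot_\mu\cdots\cdot_\mu W(x^{n_k})$ follows immediately from $W$ being an algebra morphism applied to $x^{n_1+\cdots+n_k}=x^{n_1}\cdots x^{n_k}$. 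The only nontrivial input is the identification of $W^{-1}$, which reduces to the standard compatibility between coassociativity and convolution; the rest is transport of structure and is essentially tautological.
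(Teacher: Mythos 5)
Your proposal is correct and follows essentially the same route as the paper: the identification $W^{-1}=\mu\star\id$ is the paper's group-action identity $\phi_{\lambda_1\star\lambda_2}=\phi_{\lambda_1}\circ\phi_{\lambda_2}$ specialised to $\mu$ and $\mu^{-1}$, and the deformed Hopf structure is obtained by conjugation (transport of structure) along $\phi_\mu^{-1}=W$, exactly as in Theorems \ref{deformation} and \ref{ww}, where the paper itself remarks that the result ``follows directly from the properties of conjugacy'' before writing out the axiom checks. The only cosmetic difference is that you define the antipode directly as $W\circ S\circ W^{-1}$, whereas the paper deduces its existence from graded connectedness and its compatibility from uniqueness; both steps are immediate.
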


%Furthermore, we will present Hopf-algebraic versions of classical multivariate formulae on relations between moments and cumulants.

\smallskip 

We recall that in the case of a single random variable $X$ with finite moments of all orders, the sequence $(\kappa_n)_{n\geq 0}$ of cumulants of $X$ is defined by the following formal power series relation between exponential generating functions
\begin{equation}
\label{expo}
	\exp\left(\sum_{n\geq 0} \frac{t^n}{n!}\,\kappa_n\right) 
	= \sum_{n\geq 0} \frac{t^n}{n!}\mu_n,
\end{equation}
where $t$ is a formal variable and $\mu_n=\E(X^n)$ is the $n$th-order moment of $X$. Note that $\mu_0=1$ and $\kappa_0=0$. Equation \eqref{expo} is equivalent to the classical recursion 
\begin{equation}
\label{Bell}
	\mu_n = \sum\limits_{m=1}^{n}\binom{n-1}{m-1} \kappa_{m} \mu_{n-m}.
\end{equation}
In fact, equation \eqref{expo} together with \eqref{Bell} provide the definition of the classical Bell polynomials, which, in turn, are closely related to the Fa\`a di Bruno formula \cite{riordan}.  

We will show multivariate generalization of the following formulae that express Hopf algebraically the moments/cumulants relations

\begin{theorem}
Setting $\mu,\kappa\in H^*$, $\mu(x^n):=\mu_n$ and $\kappa(x^n):=\kappa_n$, $n\geq 0$, we 
have the relations
\begin{equation}\label{muk0}
  \mu = \exp^\star (\kappa):=\varepsilon+\sum_{n\geq 1} \frac1{n!}\,\kappa^{\star n},
\end{equation}
\begin{equation}\label{kappam0}
	\kappa=\log^\star(\mu):=\sum_{n\geq 1} \frac{(-1)^{n-1}}n \, (\mu- \varepsilon)^{\star n},
\end{equation}
where $\varepsilon(x^k):=\ind{(k=0)}$.
\end{theorem}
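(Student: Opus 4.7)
The plan is to reduce the statement to the classical scalar identity \eqref{expo} via an exponential generating function transform on $H^*$. Define $\Phi: H^* \to \R[[t]]$ by
\[
\Phi(\alpha)(t) := \sum_{n\geq 0} \frac{\alpha(x^n)}{n!}\, t^n.
\]
Using the binomial coproduct \eqref{deltan} together with \eqref{convolproduct}, a direct computation yields
\[
\Phi(\alpha\star\beta)(t) = \sum_{n \geq 0} \frac{t^n}{n!} \sum_{k=0}^n \binom{n}{k} \alpha(x^{n-k})\,\beta(x^k) = \Phi(\alpha)(t)\,\Phi(\beta)(t),
\]
so $\Phi$ is an algebra morphism from $(H^*,\star)$ to $\R[[t]]$ equipped with pointwise multiplication. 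It is clearly injective, since $\alpha(x^n) = n!\,[t^n]\,\Phi(\alpha)(t)$.

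Next I would verify that the two infinite series in \eqref{muk0} and \eqref{kappam0} are well-defined by a coefficient-wise finiteness argument. Since $\kappa(1) = \kappa_0 = 0$, iterating \eqref{convolproduct} shows that each summand in $\kappa^{\star n}(x^m)$ vanishes unless every one of the $n$ factors is evaluated on a monomial of positive degree, which forces $n \leq m$. Hence $\exp^\star(\kappa)(x^m)$ reduces to a finite sum for each $m$. The same argument applied to $\mu - \varepsilon$, which vanishes at $1$ because $\mu_0=1$, ensures $\log^\star(\mu)(x^m)$ is likewise a finite sum. This is the only genuinely technical point in the proof.

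Finally I would combine the two observations. Applying $\Phi$ to the defining series of $\exp^\star(\kappa)$ term by term (legitimate by the finiteness just established) and using the morphism property gives
\[
\Phi(\exp^\star(\kappa))(t) = 1 + \sum_{n\geq 1}\frac{1}{n!}\,\Phi(\kappa)(t)^n = \exp\bigl(\Phi(\kappa)(t)\bigr),
\]
after noting $\Phi(\varepsilon)(t)=1$. The classical definition \eqref{expo} is precisely the identity $\Phi(\mu)(t) = \exp(\Phi(\kappa)(t))$ (using $\kappa_0=0$ to drop the constant term in the exponent), so $\Phi(\mu) = \Phi(\exp^\star(\kappa))$, and injectivity of $\Phi$ yields \eqref{muk0}. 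For \eqref{kappam0} I would apply $\Phi$ to $\log^\star(\mu)$, obtaining $\log\bigl(1 + \Phi(\mu-\varepsilon)(t)\bigr) = \log(\Phi(\mu)(t)) = \Phi(\kappa)(t)$, where the last equality inverts the relation just proved (valid because $\Phi(\kappa)(0) = 0$), and conclude once more by injectivity of $\Phi$. Granted the finiteness step, the theorem is a direct translation of the classical scalar moment--cumulant identity into the Hopf-algebraic language of $(H^*, \star)$.
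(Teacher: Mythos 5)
Your proof is correct, including the one point that genuinely needs care: the termwise finiteness of the two $\star$-series, which you get from $\kappa(1)=0$ and $(\mu-\varepsilon)(1)=0$, exactly as the paper does. Your route, however, is organized differently from the paper's. You package the statement as an instance of the classical scalar identity \eqref{expo} by introducing the exponential-generating-function transform $\Phi:(H^*,\star)\to(\R[[t]],\cdot)$, checking it is an injective algebra morphism, and then pushing $\exp^\star$ and $\log^\star$ through it. The paper instead proves the multivariate generalizations (Propositions \ref{mukappa} and \ref{kappamu}): there the cumulants are \emph{defined} by the combinatorial recursion \eqref{LS1'}, and the proof consists of expanding the iterated coproduct to get $\kappa^{\otimes n}\Delta^{n-1}A=\sum\binom{A}{B_1\dots B_n}\prod_i\kappa(B_i)$ and observing that summing over $n$ with the $1/n!$ weights reproduces \eqref{LS1'} verbatim; the log formula then follows by the same formal-series inversion you use, and the equivalence with the generating-function definition is handled separately in \eqref{expo3}. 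The two arguments rest on the same underlying fact — the binomial coproduct dualizes to multiplication of exponential generating functions — but your version makes that fact explicit as a morphism and is arguably the most economical proof of the univariate statement as literally formulated (where $\kappa$ is defined by \eqref{expo}), while the paper's version works directly at the level of the Hopf algebra $H$ and the multiset binomial coefficients, which is what scales to the multivariate setting and yields the Leonov--Shiryaev relations \eqref{LS1}--\eqref{LS2} as immediate corollaries.
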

Note that the $n$-fold convolution product $\kappa^{\star n}=\kappa \star \cdots \star \kappa$ ($n$ times) is well-defined as the convolution product defined in \eqref{convolproduct} is associative. The above formulae \eqref{muk0} and \eqref{kappam0} are Hopf-algebraic interpretations of
the classical {\it Leonov--Shiryaev relations} \cite{LeonovShiryaev1959}, see \eqref{LS1} 
and \eqref{LS2} below.

\subsection{Deformation of products}

Our Theorem \ref{1.2} above introduces the idea of a deformed product $\cdot_\mu$ in a polynomial 
algebra. This idea is
used in a very important way in the recent theory of regularity structures \cite{reg,bhz,Hai-Cha},
which is based on {\it products of random distributions}, i.e. of generalized functions on $\R^d$. 
Such products are in fact ill-defined and need to be {\it renormalized}; this operation corresponds
algebraically to a deformation of the standard pointwise product, and is achieved through a 
comodule structure which extends the coproduct \eqref{deltan} to a much larger class of generalized
monomials.

In the last sections of this paper we extend the notion of a deformed product to more general 
comodules and we discuss one important and instructive example, the space of decorated rooted trees
endowed with the extraction-contraction operator. This setting is relevant for branched rough paths 
\cite{Gubinelli2010693}, and constitutes a first step towards
the more complex framework of regularity structures \cite{bhz}. 

We hope that this discussion may help the algebraic-minded reader becoming more familiar with 
a theory which combines probability, analysis and algebra in a very deep and innovative way.

%%%%%%%%%%%%%%%%%%%%%%%%%%%%%%%%%

\subsection{Organisation of the paper}

In Section \ref{sect:cumulantsHopf} we briefly review classical multivariate moment--cumulants relations. Section \ref{Sect:CumulHopf} provides an interpretation of these relations in a Hopf-algebraic context. In Section \ref{sect:wick} we extend the previous approach to generalized Wick polynomials. Section \ref{sect:deform} is devoted to Hopf algebra deformations, which are applied to  Wick polynomials in Section \ref{sect:anoterway}. In Section \ref{sect:inverse} still another interpretation of Wick polynomials in terms of a suitable comodule structure is introduced. Section \ref{sect:pointwise} explains the deformation of the pointwise product on functions. Section \ref{sect:Wicktrees} addresses the problem of extending our results to Hopf algebras of non-planar decorated rooted trees. It prepares for Section \ref{sect:final}, which outlines briefly the idea of applying the Hopf algebra approach to cumulants and Wick products in the context of regularity structures. 
 
\medskip

Apart from the basic definitions in the theory of coalgebras and Hopf algebras, for which we refer the reader to P.~Cartier's {\sl Primer on Hopf algebras} \cite{Cartier}, this article aims at being a self-contained reference on cumulants and Wick products both for probabilists and algebraists interested in probability. We have therefore detailed proofs and constructions, even those that may 
seem obvious to experts from one of these two fields. 

\medskip
For convenience and in view of applications to scalar real-valued random variables, we fix the field of real numbers $\R$ as ground field. Notice however that algebraic results and constructions in the article depend only on the ground field being of characteristic zero.

\medskip

{\bf{Acknowledgements}}: The second author acknowledges support from the CARMA grant ANR-12-BS01-0017, ``Combinatoire Alg\'ebrique, R\'esurgence, Moules et Applications". The third author was partially supported by the CONICYT/Doctorado Nacional/2013-21130733 doctoral scholarship and acknowledges support from the ``Fondation Sciences Math\'{e}matiques de Paris''. The fourth author acknowledges support of the ANR project ANR-15-CE40-0020-01 grant LSD.

%%%%%%%%%%%%%%%%%%%%%%%%%%%%%%%%%
%%%%%%%%%%%%%%%%%%%%%%%%%%%%%%%%%

\section{Joint cumulants and moments}
\label{sect:cumulantsHopf}

We start by briefly reviewing classical multivariate moment--cumulant relations. 

%%%%%%%%%%%%%%%%%%%%%%%%%%%%%%%%%

\subsection{Cumulants}
\label{ssect:cumulants}

If we have a finite family of random variables $(X_a, a \in \mathcal{S})$ such that $X_a$ has finite moments of all orders for every $a \in \mathcal{S}$, then the analogue of the exponential formula \eqref{expo} holds
\begin{equation}
\label{expo2}
	\exp\left(\sum_{\underbar{n}\in \N^\mathcal{S}} \frac{t^{\underbar{n}}}{\underbar{n}!}\,\kappa_{\underbar{n}}\right) 
	= \sum_{\underbar{n}\in\N^\mathcal{S}} \frac{t^{\underbar{n}}}{\underbar{n}!}\, \mu_{\underbar{n}},
\end{equation}
where $\mu_{\underbar{n}}:=\E(X^{\underbar{n}}).$ Here we use multivariable notation, i.e., with $\N:=\{0,1,2,3,\ldots\}$ and $(t_a,a\in \mathcal{S})$ commuting variables, we define for $\underbar{n}=(n_a,a\in \mathcal{S})\in\N^\mathcal{S}$
\[
	t^{\underbar{n}}:=\prod_{a\in \mathcal{S}} (t_a)^{n_a}, 
	\qquad 
	X^{\underbar{n}}:=\prod_{a\in \mathcal{S}} (X_a)^{n_a}, 
	\qquad 
	\underbar{n}!:=\prod_{a\in \mathcal{S}}(n_a)!,
\]
and we use the conventions $(t_a)^0:=1$, $(X_a)^0:=1$. This defines in a unique way the family $(\kappa_{\underbar{n}}, \underbar{n} \in \N^\mathcal{S})$ of joint cumulants of $(X_a,a\in \mathcal{S})$ once the family of corresponding joint moments $(\mu_{\underbar{n}}, \in\N^\mathcal{S})$ is given. When it is necessary to specify the dependence of $\kappa_{\underbar{n}}$ on $(X_a,a\in \mathcal{S})$ we shall write $\kappa_{\underbar{n}}(X)$, and similarly for $\mu_{\underbar{n}}$.

Identifying a subset $B \subseteq \mathcal{S}$ with its indicator function $\ind{B} \in \{0,1\}^\mathcal{S}$, we can use the notation $ \kappa_B$ and $\mu_B$ for the corresponding joint cumulants and moments. The families $(\kappa_B, B\subseteq \mathcal{S})$ and  $(\mu_B, B\subseteq \mathcal{S})$ satisfy the so-called {\it Leonov--Shiryaev relations} \cite{LeonovShiryaev1959,Speed}
\begin{align}
\label{LS1}
	\mu_B&=\sum_{\pi\in\CP(B)} \prod_{C\in\pi}\kappa_{C}\\
\label{LS2}
	\kappa_B&=\sum_{\pi\in \CP(B)} (|\pi|-1)! \, (-1)^{|\pi|-1} \prod_{{C}\in\pi} \mu_{C} ,
\end{align}
where we write $\CP(B)$ for the set of all set partitions of $B$, namely, all collections $\pi$ of subsets (blocks) of $B$ such that $\cup_{{C} \in \pi} {C}=B$ and elements of $\pi$ are pairwise disjoint; moreover $|\pi|$ denotes the number of blocks of $\pi$, which is finite since $B$ is finite. Formulae \eqref{LS1} and \eqref{LS2} have been intensively studied from a combinatorial perspective, see, e.g., \cite[Chapter 2]{Pec-Taq}. Regarding the properties of cumulants we refer the reader to \cite{Speed}. 

Formula \eqref{LS1} has in fact been adopted, for instance, in \cite{Hai-She} as a recursive definition for $(\kappa_B, B\subseteq \mathcal{S})$. This approach does indeed determine the cumulants uniquely by induction over the cardinality $|B|$ of the finite set $B$. This follows from the right-hand side containing $\kappa_B$, which is what we want to define, as well as $\kappa_{C}$ for some ${C}$ with $|{C}| < |B|$, which have been already defined in lower order.

Although this recursive approach seems less general than the one via exponential generating functions as in \eqref{expo2}, since it forces to consider only $\underbar{n} \in \{0,1\}^\mathcal{S}$, it turns out that they are equivalent. Indeed, replacing $(X_a, a \in \mathcal{S})$ with $(Y_b, b \in \mathcal{S}\times\N^\ast)$, where $Y_b:=X_a$ for $b=(a,k)\in \mathcal{S}\times\N^\ast$, then for $\underbar{n}\in\N^\mathcal{S}$ we have
\[
	\kappa_{\underbar{n}}(X)=\kappa_B(Y), 
	\qquad 
	B=\{(a,k): a\in \mathcal{S}, \ 1\leq k\leq \underbar{n}(a)\}.
\]

In this paper we show that the Leonov--Shiryaev relations \eqref{LS1}-\eqref{LS2} have an elegant Hopf-algebraic interpretation which also extends to Wick polynomials. Notice that a different algebraic interpretation of \eqref{LS1}-\eqref{LS2} has been given in terms of M\"obius calculus \cite{Pec-Taq,Speed}. Moreover, the idea of writing moment--cumulant relations in terms of convolution products is closely related to Rota's Umbral calculus \cite{JoRo,Rota-Shen}.

%%%%%%%%%%%%%%%%%%%%%%%%%%%%%%%%%
%%%%%%%%%%%%%%%%%%%%%%%%%%%%%%%%%

\section{From cumulants to Hopf algebras}
\label{Sect:CumulHopf}

In this section we explain how classical moment--cumulant relations can be encoded using Hopf algebra techniques. These results may be folklore among followers of Rota's combinatorial approach to probability, and, as we already alluded at, there exist actually in the literature already various other algebraic descriptions of moment--cumulant relations (via generating series as well as more sophisticated approaches in terms of umbral calculus, tensor algebras and set partitions). Our approach is most suited regarding our later applications, i.e., the Hopf algebraic study of Wick products. Since these ideas do not seem to be well-known to probabilists, we believe that they deserve a detailed presentation.

%%%%%%%%%%%%%%%%%%%%%%%%%%%%%%%%%

\subsection{Moment--cumulant relations via multisets}
\label{ssect:multisets}

Throughout the paper we consider a fixed collection of real-valued random variables $\CX = \{X_a\}_{a \in\CA}$ defined on a probability space $(\Omega,{\mathcal F},{\mathbb P})$ for an index set $\CA$. We suppose that $X_a$ has finite moments of all orders for every $a\in\CA$.

We do not assume that $\CA$ is finite, but moments and cumulants will be defined only for finite subfamilies. We extend the setting of \eqref{expo2}, where $\mathcal{S}$ was a finite set, by defining $\MS(\CA) \subset \N^\CA$ as the set of all {\it finitely supported} functions $B:\CA\to\N$. In the case of $B\in\MS(\CA)\cap\{0,1\}^\CA$, we have that $B$ is the indicator function of a finite set $\mathcal{S}(B)$, namely the support of $B$. For a general $C\in\MS(\CA)$, we can identify the finite set $\mathcal{S}(C)$ given by the support of $C$, and then $C(a) \geq 1$ can be interpreted as the multiplicity of $a\in \mathcal{S}(C)$ in $C$ viewed as a multiset.

This multiset context is motivated by the following natural definition for $B\in\MS(\CA)$:
\begin{equation}\label{def:X_B}
	X^\emptyset:=1, \qquad
  X^B := \prod_{\substack{a\in \CA \\ B(a)>0}} (X_a)^{B(a)}.
\end{equation}
For all $B\in\MS(\CA)$ we also set
\[
	|B|:=\sum_{a\in \CA} B(a)<+\infty.
\]

The set $\MS(\CA)$ is a poset for the partial order defined by $B\leq B'$ if and only if $B(a)\leq B'(a)$ for all $a$ in $\CA$. 
Moreover, it is a commutative monoid for the product 
\begin{equation}\label{cdot}
	(A \cdot B)(a):=A(a)+B(a), \qquad a\in\CA,
\end{equation}
for $A,B$ in $\MS(\CA)$, i.e., the map $(A,B)\to A\cdot B$ is associative and commutative. 
The set $\MS(\CA)$ is actually the free commutative monoid generated by the indicator functions of the one-element sets $\{a\},\ a\in\CA$ (with neutral element the indicator function of the empty set). 

\begin{definition}\label{def:H}
We call $H$ the vector space freely generated by the set $\MS(\CA)$.
\end{definition}

As a vector space, $H$ is isomorphic to the algebra of polynomials over the set of (commuting) variables $x_a, \ a\in\CA$ (the isomorphism given by mapping $B\in\MS(\CA)$ to monomials $\prod\limits_{a \in \CA}x_a^{B(a)}$). Moreover the product \eqref{cdot} is motivated, using the notation \eqref{def:X_B}, by
\[
	X^{A\cdot B} = X^A X^B, \qquad A,B \in \MS(\CA),
\]
and is therefore the multivariate analogue of \eqref{xn}.

%For $A,B\in\MS(\CA)$ we set
%\[
%	\binom{A}{B}=\ind{(B\leq A)}\prod_{a\in\CA} \binom{A(a)}{B(a)},
%\]
For $n\geq 1$ and for $B,B_1,\ldots,B_n\in\MS(\CA)$  we set
\[
	\binom{B}{B_1 \dots B_n}:=\ind{(B_1\cdot B_2 \cdots B_n=B)}\prod_{a\in\CA} \frac{B(a)!}{B_1(a)!\cdots B_n(a)!},
\]
where $B_1\cdot B_2 \cdots B_n$ is the product of $B_1,\ldots,B_n$ in $\MS(\CA)$ using the
multiplication law defined in \eqref{cdot}. Note that for a given $B\in\MS(\CA)$, there exist only finitely many 
$B_1,\ldots,B_n\in\MS(\CA)$ such that $B_1\cdot B_2 \cdots B_n=B$.

\begin{definition} \label{def:cumu}
For every $B\in\MS(\CA)$, we define the cumulant $\Cum(X_B)$ inductively over $|B|$ by $\Cum(X_\emptyset)=0$ and else
\begin{equation} \label{LS1'}
	\E \big( X^B\big)
	= \sum\limits_{n=1}^{|B|}\frac{1}{n!}\sum_{B_1,\ldots,B_n\in\MS(\CA)\setminus\{\emptyset\}} 
	\binom{B}{B_1\dots B_n}\prod_{i=1}^n \Cum \big(X_{B_i}\big)\;.
\end{equation}
\end{definition}

\begin{remark} If $B\in\MS(\CA)\cap\{0,1\}^\CA$, then \eqref{LS1'} reduces to the first Leonov--Shiryaev relation \eqref{LS1}, since on the right-hand side of \eqref{LS1'} $B_1,\ldots,B_n\in\MS(\CA)$ are also in $\{0,1\}^\CA$ and in particular the binomial coefficient (when non-zero) is equal to 1.
\end{remark}

As we will show in \eqref{expo3} below, expression \eqref{LS1'} is equivalent to the usual formal power series definition of cumulants (whose exponential generating series is the logarithm of the exponential generating series of moments). As for \eqref{LS1}, expression \eqref{LS1'} does indeed determine the cumulants uniquely by induction over $|B|$. This is because the right-hand  side only involves $\Cum(X_B)$, which is what we want to define, as well as $\Cum(X_{\bar B})$ for some $\bar B$ with $|\bar B| < |B|$, which is already defined  by the inductive hypothesis.

\subsection{Exponential generating functions}

Define two linear functionals on $H$:
\begin{equation}\label{muka}
	\begin{array}{ll}
	\mu: &H\to\R \\ 
	       & A \mapsto \mu(A):=\E(X^A)
	\end{array}
	\qquad\ 
	\begin{array}{ll}
	\kappa: & H\to\R\\ 
		   & A \mapsto \kappa(A):=\Cum(X_A),\\
	\end{array}
\end{equation}
where $A\in \MS(\CA)$,
 $\mu(\emptyset):=1$ and $\kappa(\emptyset):=0$.

Let us fix a finite subset $\mathcal{S}=\{a_1,\ldots,a_p\}\subset\CA$. For $B\in\MS(\mathcal{S})$ we set 
\[
	t^B:=\prod_{i=1}^p (t_i)^{B(a_i)},
\]
where the $t_i$ are commuting variables. Then we define for $B\in\MS(\CA)$ the factorial 
\[
	B!:=\prod_{a\in\CA} (B(a))!,
\]
and the exponential generating function of $\lambda\in H^*$ (seen as a formal power series in the variables $t_i$)
\[
	\varphi_\lambda(t,\mathcal{S}):=\sum_{B\in\MS(\mathcal{S})} \frac{t^B}{B!} \,\lambda(B).
\]
Then from Definition \ref{def:cumu} we get the usual exponential relation between the exponential moment and cumulant generating functions of $\mu$ and $\kappa$, analogous to \eqref{expo} and \eqref{expo2}: 
\begin{equation}\label{expo3}
	\begin{split}
	\varphi_\mu(t,\mathcal{S})&=\sum_{B\in\MS(\mathcal{S})} \frac{t^B}{B!}\, \mu(B)
	\\ & = \sum_{n\geq 0} \frac1{n!}\sum_{B\in\MS(\mathcal{S})} \sum_{B_1,\ldots,B_n\in\MS(\mathcal{S})} 
	\frac1{B!}\binom{B}{B_1\dots B_n}
	\prod_{i=1}^n\left({t^{B_i}} \,\kappa(B_i)\right)
	\\ & = \sum_{n\geq 0} \frac1{n!}\sum_{B_1,\ldots,B_n\in\MS(\mathcal{S})} \prod_{i=1}^n\left(\frac{t^{B_i}}{B_i!} \,\kappa(B_i)\right)
	\\ & = \sum_{n\geq 0} \frac1{n!}\left(\sum_{B\in\MS(\mathcal{S})} \frac{t^B}{B!}\, \kappa(B)\right)^n
	=\exp(\varphi_\kappa(t,\mathcal{S})).
\end{split}
\end{equation}
From \eqref{expo3} we obtain another recursive relation between moments and cumulants. Let us set $\ind{(a)}(b):=\ind{(a=b)}$ for $a,b\in\CA$. Then we have
\begin{equation}
\label{otrec}
	\mu(A\cdot\ind{(a)}) = \sum_{B_1,B_2\in\MS(\CA)} \binom{A}{B_1 \,B_2} \, \kappa(B_1\cdot\ind{(a)})\, \mu(B_2).
\end{equation}
This recursion is the multivariate analogue of the one in \eqref{Bell}.

%%%%%%%%%%%%%%%%%%%%%%%%%%%%%%%%%

\subsection{Moment--cumulant relations and Hopf algebras}
\label{ssect{MCHopf}}

We endow now the space $H$ from Definition \ref{def:H} with the commutative and associative product $\cdot:H\otimes H\to H$ induced by the monoid structure of $\MS(\CA)$ defined in \eqref{cdot}.  
The unit element is the null function $\emptyset$ (we will also write abusively $\emptyset$ for the unit map -- the embedding of $\R$ into $H$: $\lambda\longmapsto \lambda\cdot\emptyset$). We also define a coproduct $\Delta: H\to H\otimes H$ on $H$ by
\begin{equation}
\label{coprodH}
	\Delta A:=\sum_{B_1,B_2\in\MS(\CA)} \binom{A}{B_1\, B_2} \left[B_1\otimes B_2\right],
\end{equation}
recall \eqref{deltan}.
The counit $\varepsilon:H\to\R$ is defined by $\varepsilon(A)=\ind{(A=\emptyset)}$ and turns $H$ into a coassociative counital coalgebra. Coassociativity $(\Delta\otimes \id) \Delta=(\id\otimes\Delta ) \Delta$ follows from the associativity of the monoid $\MS(\CA)$:
\begin{align*}
	(\Delta\otimes \id) \Delta A&=
	\sum_{(B_1\cdot B_2)\cdot B_3=A} \binom{A}{B_1B_2B_3} \left[B_1\otimes B_2\otimes B_3\right]\\
	&=\sum_{B_1\cdot (B_2\cdot B_3)=A} \binom{A}{B_1B_2B_3} \left[B_1\otimes B_2\otimes B_3\right]\\
	&=(\id\otimes\Delta ) \Delta A.
\end{align*}

\begin{proposition}\label{prop:Hopf}
$H$ is a commutative and cocommutative bialgebra and, since $H$ is graded by $|A|$ as well as connected, it is a Hopf algebra. 
\end{proposition}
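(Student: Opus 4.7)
The plan is to verify in turn (i) that the product $\cdot$ and the coproduct $\Delta$ are compatible so as to give $H$ the structure of a bialgebra, (ii) that this bialgebra is both commutative and cocommutative, and finally (iii) to invoke a standard structural theorem to conclude that the grading by $|A|$ together with connectedness $H_0 = \R\cdot\emptyset$ automatically provides an antipode.

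First I would dispose of the easy pieces. Commutativity of $\cdot$ is immediate from commutativity of the monoid operation \eqref{cdot}. Cocommutativity of $\Delta$ follows from the symmetry $\binom{A}{B_1\,B_2}=\binom{A}{B_2\,B_1}$ in the definition \eqref{coprodH}. Coassociativity has already been checked in the excerpt. The counit $\varepsilon(A)=\ind{(A=\emptyset)}$ satisfies the counit axioms because the only decompositions $B_1\cdot B_2 = A$ with $B_1=\emptyset$ (resp.\ $B_2=\emptyset$) force $B_2=A$ (resp.\ $B_1=A$) with multinomial coefficient $1$.

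The real content is to show that $\Delta$ is an algebra morphism, i.e.
\[
\Delta(A\cdot A') \;=\; \Delta(A)\cdot \Delta(A'), \qquad A,A'\in\MS(\CA),
\]
where the right-hand product is the componentwise one on $H\otimes H$. Expanding both sides, the identity to prove becomes
\[
\binom{A\cdot A'}{B_1\,B_2}
\;=\; \sum_{\substack{C_1\cdot D_1=B_1\\ C_2\cdot D_2=B_2}}
\binom{A}{C_1\,C_2}\binom{A'}{D_1\,D_2},
\]
for every decomposition $B_1\cdot B_2 = A\cdot A'$. Since the multinomial coefficient $\binom{A}{B_1\,B_2}$ factorises as $\prod_{a\in\CA}\binom{A(a)}{B_1(a)}$ (with the usual binomial on the right, and the product being finitely supported), the identity reduces, one coordinate $a\in\CA$ at a time, to the classical Chu–Vandermonde identity
\[
\binom{A(a)+A'(a)}{B_1(a)}
\;=\; \sum_{k=0}^{B_1(a)} \binom{A(a)}{k}\binom{A'(a)}{B_1(a)-k},
\]
applied with $k=C_1(a)$ and $B_1(a)-k=D_1(a)$, together with the complementary identity for the second tensor factor. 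This is the only step requiring actual computation; I expect it to be the main (though routine) obstacle, and the book-keeping is handled entirely by the coordinate-wise factorisation of the multinomials.

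Finally, the grading $H=\bigoplus_{n\geq 0} H_n$ with $H_n=\mathrm{span}\{A\in\MS(\CA):|A|=n\}$ is respected by both $\cdot$ (since $|A\cdot A'|=|A|+|A'|$) and $\Delta$ (since any decomposition $B_1\cdot B_2=A$ satisfies $|B_1|+|B_2|=|A|$), and $H_0=\R\cdot\emptyset$ is one-dimensional, so $H$ is a connected graded bialgebra. By the standard existence result (see e.g.\ \cite{Cartier}), any such bialgebra admits a unique antipode, defined recursively on the grading by $S(\emptyset)=\emptyset$ and $S(A)=-A-\sum_{(A)} S(A_{(1)})\cdot A_{(2)}$ for $|A|\geq 1$, where the sum runs over the non-trivial part of $\Delta A$. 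This upgrades $H$ to a Hopf algebra, completing the proof.
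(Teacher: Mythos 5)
Your proof is correct, but it follows a different route from the paper. You verify the bialgebra axioms directly on the basis $\MS(\CA)$: the compatibility $\Delta(A\cdot A')=\Delta(A)\cdot\Delta(A')$ is reduced, coordinate by coordinate via the factorisation $\binom{A}{B_1\,B_2}=\prod_{a}\binom{A(a)}{B_1(a)}$, to the Chu--Vandermonde identity, and the antipode is then obtained from the general fact that a connected graded bialgebra admits one, constructed by the usual recursion on the grading. The paper instead transfers the structure: it identifies $H$ as a vector space with the polynomial algebra $P=\R[x_a:a\in\CA]$, equips $P$ with its standard Hopf structure in which the $x_a$ are primitive (so that $\Delta_P$ is the unique algebra map with $\Delta_P(x_a)=x_a\otimes 1+1\otimes x_a$ and the antipode is the algebra map $S(x_a)=-x_a$), and asserts that the natural isomorphism is an algebra map carrying $\Delta$ to $\Delta_P$, leaving that check to the reader. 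The two arguments are essentially complementary: the paper's is shorter and yields the antipode in closed form ($S(A)=(-1)^{|A|}A$), while your computation is exactly the content hidden in the paper's ``we let the reader check'' step (verifying that $\Delta$ agrees with the multiplicative coproduct on primitives amounts to the same multinomial/Vandermonde bookkeeping), at the cost of getting the antipode only recursively. Both are complete and valid; your version is more self-contained, the paper's more structural.
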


\begin{proof}
Indeed, we already noticed that $H$ is isomorphic as a vector space to the polynomial algebra, denoted $P$, generated by commuting variables $x_a,\ a\in\CA$. The latter is uniquely equipped with a bialgebra and Hopf algebra structure by requiring the $x_a$ to be primitive elements, that is, by defining a coproduct map $\Delta_P: P\to P\otimes P$ such that it is an algebra map and $\Delta_P(x_a)=x_a\otimes 1+1\otimes x_a$. Recall that since $P$ is a polynomial algebra, these two conditions define $\Delta_P$ uniquely. The antipode is the algebra endomorphism of $P$ induced by $S(x_a):=-x_a$. We let the reader check that the natural isomorphism between $H$ and $P$ is an isomorphism of algebras and maps $\Delta$ to $\Delta_P$. The proposition follows.
\end{proof}

Recall that the dual of a coalgebra is an algebra, which is associative (resp.~unital, commutative) if the coalgebra is coassociative (resp.~counital, cocommutative). In particular, the dual $H^\ast$ of $H$ is equipped with an associative and commutative unital product written $\star$, defined  for all $f,g\in H^\ast$ and $A\in H$ by:
\begin{equation}
\label{convol}
	(f\star g)(A):=(f\otimes g)\Delta A.
\end{equation}
The unit of this product is the augmentation map $\varepsilon$. For later use, we also mention that the associative product defined in \eqref{convol} extends to linear endomorphisms $f,g$ of $H$ as well as to the product of a linear form on $H$ with a linear endomorphism of $H$, by the same defining formula.

We denote $\Delta^0 :=\id:H\to H$, $\Delta^1:=\Delta:H\to H\otimes H$, and for $n\geq 2$:
\[
	\Delta^n:=(\Delta\otimes\id)\Delta^{n-1}:H\to H^{\otimes (n+1)}.
\]

\begin{proposition}\label{mukappa}
We have
\begin{equation}\label{muk}
	\mu = \exp^\star (\kappa)=\varepsilon+\sum_{n\geq 1} \frac1{n!}\,\kappa^{\star n}.
\end{equation}
\end{proposition}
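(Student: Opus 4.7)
The plan is to recognize that this proposition is essentially an unpacking of Definition \ref{def:cumu} once one expresses iterated convolution powers in terms of the iterated coproduct. First I would compute $\kappa^{\star n}(B)$ for $B\in\MS(\CA)$ by using coassociativity to write
\[
  \kappa^{\star n}(B)=\kappa^{\otimes n}\bigl(\Delta^{n-1}B\bigr)
  =\sum_{B_1\cdot B_2\cdots B_n=B}\binom{B}{B_1\dots B_n}\prod_{i=1}^{n}\kappa(B_i),
\]
which follows by a straightforward induction on $n$ from \eqref{coprodH} together with the associativity of the monoid product on $\MS(\CA)$ (exactly the associativity used in the proof of coassociativity above).

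Next I would exploit the fact that $\kappa(\emptyset)=0$: any summand in which some $B_i=\emptyset$ is killed, so the sum reduces to a sum over tuples $(B_1,\ldots,B_n)$ of nonempty elements of $\MS(\CA)$ with product equal to $B$. In particular, since $|B_1|+\cdots+|B_n|=|B|$ with each $|B_i|\geq 1$, the sum is empty for $n>|B|$, so $\sum_{n\geq 1}\frac{1}{n!}\kappa^{\star n}(B)$ is a \emph{finite} sum and there is no convergence issue to worry about; this also accounts correctly for the case $B=\emptyset$, where the only contribution is $\varepsilon(\emptyset)=1=\mu(\emptyset)$.

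Putting these two observations together, for $B\neq\emptyset$ one has
\[
  \Bigl(\varepsilon+\sum_{n\geq 1}\frac{1}{n!}\kappa^{\star n}\Bigr)(B)
  =\sum_{n=1}^{|B|}\frac{1}{n!}\sum_{\substack{B_1,\ldots,B_n\in\MS(\CA)\setminus\{\emptyset\}}}
  \binom{B}{B_1\dots B_n}\prod_{i=1}^{n}\kappa(B_i),
\]
which is precisely the right-hand side of \eqref{LS1'}, i.e.\ $\mu(B)=\E(X^B)$. For $B=\emptyset$ both sides equal $1$. Hence $\mu=\exp^{\star}(\kappa)$ as required.

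I do not expect a serious obstacle here: the only subtlety is bookkeeping, namely the passage from ordered tuples $(B_1,\ldots,B_n)$ summed with the factor $\tfrac{1}{n!}$ to the (unordered) multiset formulation implicit in Definition \ref{def:cumu}. This is built into the definition of the multinomial coefficient $\binom{B}{B_1\dots B_n}$, so as long as one is careful that $\binom{B}{B_1\dots B_n}$ treats its arguments as an ordered tuple (which is how it is used both in \eqref{LS1'} and in $\Delta^{n-1}$), the two expressions match term by term and no further combinatorial identity is needed.
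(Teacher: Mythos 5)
Your argument is correct and is essentially the paper's own proof: both compute $\kappa^{\star n}(B)=\kappa^{\otimes n}\Delta^{n-1}B=\sum\binom{B}{B_1\dots B_n}\prod_i\kappa(B_i)$ and then identify the resulting sum with the defining relation \eqref{LS1'}, using $\kappa(\emptyset)=0$ to discard empty blocks and guarantee finiteness. Your additional remarks on the empty multiset and the ordered-tuple bookkeeping are fine but add nothing beyond what the paper's one-line proof already relies on.
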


\begin{proof} By the definitions of $\Delta$ and $\Delta^n$ we find that
\[
\begin{split}
	\kappa^{\otimes n}\Delta^{n-1}A& = \sum_{B_1,\ldots,B_{n}\in\MS(\CA)} \binom{A}{B_1\dots B_n}\, \prod_{i=1}^n \kappa(B_i),
\end{split}
\]
and, by  \eqref{LS1'}, this yields the result.
\end{proof}

\begin{remark}
Formula \eqref{muk} is the Hopf-algebraic analogue of the
first Leo\-nov-Shiryaev relation \eqref{LS1}.
\end{remark} 

Since $\kappa(\emptyset)=0$, $\kappa^{\otimes n}\Delta^{n-1}(A)$ vanishes whenever $|A|>n$. Similarly, under the same assumption, $(\mu-\varepsilon)^{\star n}(A)=0$. It follows that one can handle formal series identities such as $\log^\star (\exp^\star (\kappa))=\kappa$ or $\exp^\star (\log^\star (\mu))=\mu$ without facing convergence issues. In particular

\begin{proposition}\label{kappamu}
We have
\begin{equation}\label{kappam}
	\kappa=\log^\star(\mu)=\sum_{n\geq 1} \frac{(-1)^{n-1}}n \, (\mu- \varepsilon)^{\star n}.
\end{equation}
\end{proposition}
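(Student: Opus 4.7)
The plan is to deduce this formula from Proposition \ref{mukappa} by applying the formal convolution logarithm to both sides of the identity $\mu = \exp^\star(\kappa)$, exploiting the fact that all the relevant power series reduce to finite sums pointwise on $H$.

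First, I would justify that both $\exp^\star$ and $\log^\star$ are well-defined on the subsets of $H^\ast$ where they are needed. The convolution algebra $H^\ast$ is filtered by the augmentation ideal $\mathfrak{m} := \{\lambda \in H^\ast : \lambda(\emptyset)=0\}$, and the graded structure of $H$ ensures that for any $A \in H$ with $|A|=N$, we have $\lambda^{\star n}(A)=0$ whenever $\lambda \in \mathfrak{m}$ and $n > N$. Indeed, expanding $\lambda^{\star n}(A) = \lambda^{\otimes n}\Delta^{n-1}(A)$ as a sum over decompositions $B_1 \cdots B_n = A$, the pigeonhole principle forces at least one $B_i=\emptyset$ when $n>|A|$, and $\lambda(\emptyset)=0$ kills the term. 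Since $\kappa \in \mathfrak{m}$ (as $\kappa(\emptyset)=0$) and $\mu - \varepsilon \in \mathfrak{m}$, both $\exp^\star(\kappa)(A)$ and $\log^\star(\mu)(A)$ are finite sums for each $A$, hence well-defined elements of $H^\ast$.

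Next, I would observe that $\exp^\star$ and $\log^\star$ are mutually inverse bijections between $\mathfrak{m}$ and the set $\mathcal{G}(H) := \{\lambda \in H^\ast : \lambda(\emptyset)=1\}$. This is the standard algebraic fact that the formal power series identities
\[
\log(1+y) = \sum_{n \geq 1} \frac{(-1)^{n-1}}{n} y^n, \qquad \exp(x) = 1 + \sum_{n\geq 1} \frac{x^n}{n!},
\]
satisfy $\log(\exp(x)) = x$ and $\exp(\log(1+y)) = 1+y$ as identities in $\mathbb{Q}[[x]]$ and $\mathbb{Q}[[y]]$ respectively. Since these identities hold coefficient-by-coefficient and the convolution algebra $H^\ast$ is commutative, they transport to the convolution setting as soon as the substituted element lies in $\mathfrak{m}$; all sums evaluate to finite ones by the vanishing argument above, so no convergence issue arises.

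Finally, applying $\log^\star$ to both sides of the identity $\mu = \exp^\star(\kappa)$ from Proposition \ref{mukappa} yields
\[
\log^\star(\mu) = \log^\star(\exp^\star(\kappa)) = \kappa,
\]
which is precisely \eqref{kappam}. The only genuinely non-trivial step is the bookkeeping in the first paragraph, ensuring that the evaluations of the formal series at any fixed $A \in H$ truncate to finite sums; this is the main (though minor) obstacle, and it is handled uniformly by the remark preceding the proposition on the vanishing of $\kappa^{\otimes n}\Delta^{n-1}(A)$ and $(\mu-\varepsilon)^{\star n}(A)$ for $n$ large compared with $|A|$.
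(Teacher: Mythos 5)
Your proof is correct and follows essentially the same route as the paper: the paper also obtains \eqref{kappam} by noting that $\kappa(\emptyset)=0$ and $(\mu-\varepsilon)(\emptyset)=0$ make all $\star$-power series truncate when evaluated on any fixed $A\in H$, so that the formal identity $\log^\star(\exp^\star(\kappa))=\kappa$ can be applied to Proposition \ref{mukappa} without convergence issues. Your filtration/pigeonhole justification simply spells out the same finiteness remark that precedes the proposition in the paper.
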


From Proposition \ref{kappamu} we obtain the formula

\begin{equation} \label{LS2'}
	\Cum(X_B) = \sum_{n\geq 1} \frac{(-1)^{n-1}}n \sum_{B_1,\ldots,B_n\in\MS(\CA)\setminus
	\{\emptyset\}} 
	\binom{B}{B_1\dots B_n}\, \prod_{i=1}^n \E(X^{B_i})
\end{equation}
which may be considered the inverse to \eqref{LS1'}. 

\begin{remark}
The formula \eqref{kappam} is the Hopf-algebraic analogue of the second Leonov--Shiryaev relation \eqref{LS2}. Moreover, for $B\in\MS(\CA)\cap\{0,1\}^\CA$, then \eqref{LS2'} 
also reduces to the second Leonov--Shiryaev relation \eqref{LS2}, since on the right-hand side of \eqref{LS2'} $B_1,\ldots,B_n$ $\in\MS(\CA)$ are also in $\{0,1\}^\CA$. 
\end{remark}

%%%%%%%%%%%%%%%%%%%%%%%%%%%%%%%%%

\subsection{A sub-coalgebra}
\label{ssect:subsets}

If one prefers to work in the combinatorial framework of the Leonov--Shiryaev formulae \eqref{LS1}-\eqref{LS2} rather than with \eqref{LS1'}-\eqref{LS2'}, then one may consider the linear span $J$ of $\MS(\CA)\cap\{0,1\}^\CA$ (namely of all finite subsets of 
$\CA$, or of their indicator functions). 

Then $J$ is a linear subspace of $H$, which is {\it not} a sub-algebra of $H$ for the product $\cdot$ defined in \eqref{cdot}. The coproduct $\Delta$
defined in \eqref{coprodH} coacts however nicely on $J$ since for all finite subsets $A$ of $\CA$
\[
	\Delta A=\sum_{B_1\cdot B_2=A} B_1\otimes B_2 \in J\otimes J.
\]
Moreover the restriction of $\varepsilon$ to $J$ defines a counit for $(J,\Delta)$. Therefore $J$ is a sub-coalgebra of $H$. With a slight abuse of notation we still write $\star$ for the dual product on $J^\ast$
\[
	(f\star g)(A):=(f\otimes g)\Delta A,
\]
for $A\in J$ and $f,g\in J^\ast$. If we denote as before
$$
	\begin{array}{ll}
	\mu: &J\to\R \\ 
	       & A \mapsto \mu(A):=\E(X^A)
	\end{array}
	\qquad\ 
	\begin{array}{ll}
	\kappa: &J\to\R\\ 
		   & A \mapsto \kappa(A):=\Cum(X_A),\\
	\end{array}
$$
with $A\in \MS(\CA)\cap\{0,1\}^\CA$, $\mu(\emptyset):=1$ and $\kappa(\emptyset):=0$, then the Leonov--Shiryaev relations \eqref{LS1}-\eqref{LS2} can be rewritten in $J^\ast$ as, respectively,
\[
	\mu = \exp^\star (\kappa)=\varepsilon+\sum_{n\geq 1} \frac1{n!}\,\kappa^{\star n}
\]
and
\[
	\kappa=\log^\star(\mu)=\sum_{n\geq 1} \frac{(-1)^{n-1}}n \, (\mu- \varepsilon)^{\star n}. 
\]

%%%%%%%%%%%%%%%%%%%%%%%%%%%%%%%%%
%%%%%%%%%%%%%%%%%%%%%%%%%%%%%%%%%

\section{Wick Products}
\label{sect:wick}

The theory of Wick products, as well as the related notion of chaos decomposition, play an important role in various fields of applied probability. Both have deep structural features in relation to the fine structure of the algebra of square integrable functions associated to one or several random variables. The aim of this section and the following ones is to revisit the theory on Hopf algebraic grounds. The basic observation is that the formula for the Wick product is closely related to the recursive definition of antipode in a connected graded Hopf algebra. This approach seems to be new, also from the point of view of concurring approaches such as umbral calculus \cite{JoRo,Rota-Shen} or set partition combinatorics \`a la Rota--Wallstrom \cite{RoWa}.

%%%%%%%%%%%%%%%%%%%%%%%%%%%%%%%%%

\subsection{Wick polynomials}
\label{ssect:Wick}

We are going to use extensively the notion of Wick polynomials for a collection of (not necessarily Gaussian) random variables 
which is defined as follows. 
 
\begin{definition} \label{def:Wick-product}
Given a collection $\CX = \{X_a\}_{a \in \CA}$ of random variables with finite moments of 
all orders, for any $A \in \MS(\CA)$ the Wick polynomial $\Wick{X_A}$ is a random variable 
defined recursively by setting $\Wick{X_\emptyset} = 1$ 
and postulating that
\begin{equation} 
\label{defWick}
	X^A := \sum_{B_1,B_2\in\MS(\CA)} \binom{A}{B_1\,B_2}\, \E(X^{B_1})  \Wick{X_{B_2}}.
\end{equation}
\end{definition}

As for cumulants, \eqref{defWick} is sufficient to define $\Wick{X_A}$ by recursion over $|A|$. Indeed, the term with $B_2=A$ is precisely the quantity we want to define, and all other terms only involve Wick polynomials $\Wick{X_B}$, for $B\in\MS(\CA)$ with $|B|<|A|$.

It is now clear that formula \eqref{defWick} can be lifted to $H$ as 
\begin{equation} 
\label{defWick2}
	A  = \sum_{B_1,B_2\in\MS(\CA)} \binom{A}{B_1\,B_2}\, \E(X^{B_1})  \Wick{{B_2}},
\end{equation}
and written in Hopf algebraic terms as follows
\begin{equation}
\label{WickCop}
	A  = (\mu\star W) (A) = (\mu\otimes W)\Delta A, 
\end{equation}
for $A \in\MS(\CA)$. We have set $W: H\to H$, $W(A):=\Wick{A}$ and call $W$ the Wick product map (see Theorem \ref{ww} for a justification of the terminology). Notice that it depends on the joint distribution of the $X_a$s. Formula \ref{WickCop} is the Hopf algebraic analogue of the definition of the Wick polynomial $\Wick{X_B}$ used in references \cite{Hai-She,Luk-Marc}. Moreover, introducing the algebra map $\ev:A\longmapsto X^A$ from $H$ to the algebra of random variables generated by $(X_a,a\in\CA)$, one gets by a recursion over $|A|$ that $\ev(\Wick{A})=\Wick{X_A}$ (for that reason, from now on we will call slightly abusively both $\Wick{A}$ and the random variable $\Wick{X_A}$ the Wick polynomial associated to $A$).

%%%%%%%%%%%%%%%%%%%%%%%%%%%%%%%%%

\subsection{A Hopf algebraic construction}
\label{ssect:2Hopf}

We want to present now a closed Hopf algebraic formula for the Wick polynomials introduced in Definition \ref{def:Wick-product}. We define the set ${\mathcal G}(H):=\{\lambda\in H^*: \lambda(\emptyset)=1\}$. Then it is well known that ${\mathcal G}(H)$ is a group for the $\star$-product. Indeed, any $\lambda\in{\mathcal G}(H)$ has an inverse $\lambda^{-1}$ in ${\mathcal G}(H)$ given by the Neumann series
\begin{equation}
\label{inverse1}
	\lambda^{-1}=\sum_{n\geq 0} (\varepsilon-\lambda)^{\star n}.
\end{equation}
As usual, this infinite sum defines an element of $H^*$ since, evaluated on any $A \in H$, it reduces to a finite number of terms. 

\begin{theorem}\label{wick:repr}
Let $\mu\in{\mathcal G}(H)$ be given by $\mu(A)=\E(X^A)$, then for all $A\in\MS(\CA)$
\begin{equation}\label{W}
	\Wick{A}=W(A)=(\mu^{-1}\star\id)(A)=(\mu^{-1}\otimes\id)\Delta A.
\end{equation}
\end{theorem}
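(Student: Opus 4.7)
My plan is to lift the defining recursion \eqref{defWick2} to an identity of linear maps $H \to H$, and then $\star$-invert $\mu$. First, I would set up a mixed convolution on the pairing $H^* \times \mathrm{End}(H)$: for $f \in H^*$ and $g \colon H \to H$, define $f \star g \colon H \to H$ by $(f \star g)(A) := (f \otimes g)\Delta A$, using the canonical identification $\R \otimes H \cong H$. Coassociativity of $\Delta$ makes this mixed product associative, so that expressions like $f \star g \star W$ (with $f, g \in H^*$ and $W \colon H \to H$) are unambiguous; moreover the counit axiom gives $\varepsilon \star W = W$ for every $W \colon H \to H$, since
$$(\varepsilon \star W)(A) = \sum \varepsilon(A_{(1)})\, W(A_{(2)}) = W\Bigl(\sum \varepsilon(A_{(1)}) A_{(2)}\Bigr) = W(A).$$

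Next, I would observe that the defining identity \eqref{WickCop}, namely $A = (\mu \otimes W)\Delta A$ for every $A \in \MS(\CA)$, reads by linearity $\mu \star W = \id_H$ in the mixed convolution algebra just described. Since $\mu(\emptyset) = 1$, $\mu$ lies in $\mathcal{G}(H)$ and therefore admits a $\star$-inverse $\mu^{-1}$ given by the Neumann series \eqref{inverse1}.

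Finally, left-convolving $\mu \star W = \id_H$ with $\mu^{-1}$ and using associativity of the mixed convolution yields
$$\mu^{-1} \star \id_H = \mu^{-1} \star \mu \star W = \varepsilon \star W = W,$$
which is precisely the claim. I do not foresee a genuine obstacle: the argument is essentially a one-line convolution manipulation. The only verifications required are that the mixed convolution is associative and that $\varepsilon$ is a left unit, and both follow immediately from the coassociativity and counit axioms for $\Delta$. One may also remark that the same manipulation proves uniqueness of $W$: any solution of $\mu \star W = \id_H$ must equal $\mu^{-1} \star \id_H$, which squares with the fact that \eqref{defWick2} defines $W(A)$ uniquely by induction on $|A|$ because $\mu(\emptyset)=1$ guarantees the coefficient of $W(A)$ in the recursion is $1$.
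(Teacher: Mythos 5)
Your argument is correct and is essentially the paper's own proof: the paper also reads \eqref{WickCop} as $\mu\star W=\id$ in the mixed convolution of linear forms with endomorphisms (introduced after \eqref{convol}) and invokes associativity of $\star$ to left-multiply by $\mu^{-1}$. Your additional verifications (associativity of the mixed product, $\varepsilon$ as unit, uniqueness) are exactly the routine checks the paper leaves implicit.
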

\begin{proof}
The identity follows from \eqref{WickCop} and from the associativity of the $\star$ product.
\end{proof}

From \eqref{inverse1} and \eqref{W} we obtain

\begin{proposition}\label{rec} Wick polynomials have the explicit expansion
\[
\begin{split}
	\Wick{A}\!=\! A \!+\! \sum_{n\geq 1} (-1)^n \sum_{B\in\MS(\CA)} 
  \sum_{\substack{B_1,\ldots,B_{n}\in\MS(\CA) \\ B_i \neq \{\emptyset\}}} 
	\binom{A}{B_1\dots B_n B} \, \mu(B_1)\cdots\mu(B_n)\,  B.
\end{split}
\]
\end{proposition}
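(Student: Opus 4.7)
The plan is to combine the two ingredients already established in the preceding paragraph: the closed formula $W = \mu^{-1}\star\id$ from Theorem \ref{wick:repr} and the Neumann series expression
\[
	\mu^{-1} = \sum_{n\geq 0}(\varepsilon-\mu)^{\star n}
\]
from \eqref{inverse1}. Substituting the second into the first gives
\[
	W(A) = A + \sum_{n\geq 1}\bigl((\varepsilon-\mu)^{\star n}\star\id\bigr)(A),
\]
where the $n=0$ term contributes the summand $A$ via $\varepsilon\star\id=\id$. It remains to unfold each $(\varepsilon-\mu)^{\star n}\star\id$ against $\Delta^{n}A$.

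By associativity of the convolution product, $(\varepsilon-\mu)^{\star n}\star\id$ evaluated on $A$ equals $\bigl((\varepsilon-\mu)^{\otimes n}\otimes\id\bigr)\Delta^{n}A$. Using the iterated coproduct
\[
	\Delta^{n}A = \sum_{B_1,\ldots,B_n,B\in\MS(\CA)} \binom{A}{B_1\cdots B_n\,B}\;B_1\otimes\cdots\otimes B_n\otimes B,
\]
which follows from the definition \eqref{coprodH} and coassociativity, I obtain
\[
	\bigl((\varepsilon-\mu)^{\star n}\star\id\bigr)(A)=\sum_{B_1,\ldots,B_n,B} \binom{A}{B_1\cdots B_n\,B}\,(\varepsilon-\mu)(B_1)\cdots(\varepsilon-\mu)(B_n)\,B.
\]

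The key observation is that $(\varepsilon-\mu)(\emptyset)=1-1=0$, so that only tuples with $B_i\neq\emptyset$ for every $i$ contribute. On such tuples $(\varepsilon-\mu)(B_i)=-\mu(B_i)$, producing an overall sign $(-1)^n$ and the product $\mu(B_1)\cdots\mu(B_n)$. Reassembling the sum over $n\geq 1$ yields exactly the stated expansion, with the outer sum over $B\in\MS(\CA)$ and the inner constrained sum over $B_1,\ldots,B_n\in\MS(\CA)\setminus\{\emptyset\}$. There is no real obstacle here; the only thing to check carefully is that the sum is well-defined termwise, which holds because for any fixed $A$ the multinomial coefficient $\binom{A}{B_1\cdots B_n B}$ forces $B_1\cdots B_n B=A$, so the number of nonzero tuples is finite and the series over $n$ terminates at $n=|A|$.
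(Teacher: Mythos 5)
Your proof is correct and follows exactly the route the paper intends: substituting the Neumann series \eqref{inverse1} for $\mu^{-1}$ into $W=\mu^{-1}\star\id$ from Theorem \ref{wick:repr} and expanding against the iterated coproduct, with the observation that $(\varepsilon-\mu)(\emptyset)=0$ killing the empty blocks and producing the sign $(-1)^n$. The paper simply states the proposition as an immediate consequence of these two formulas; your write-up supplies the same computation in detail, including the correct finiteness check.
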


%%%%%%%%%%%%%%%%%%%%%%%%%%%%%%%%%
%%%%%%%%%%%%%%%%%%%%%%%%%%%%%%%%%

\section{Hopf algebra deformations}
\label{sect:deform}

The group ${\mathcal G}(H)=\{\lambda\in H^*: \lambda(\emptyset)=1\}$ equipped with the $\star$ product acts canonically on $H$ by means of the map $\phi_\lambda:H\to H$ 
\begin{equation}\label{phi}
	\phi_\lambda(A):=(\lambda\otimes\id)\Delta A, 
\end{equation}
for $\lambda\in{\mathcal G}(H)$ and $A \in H$. In other words, $\phi_\lambda=\lambda\star\id=\id\star\lambda$, the latter identity following from cocommutativity of $\Delta$. This is a group action since one checks easily using the coassociativity of $\Delta$ that
\[
	\phi_{\lambda_1\star\lambda_2}=\phi_{\lambda_1}\circ\phi_{\lambda_2}, 
\]
so that in particular
\[
	(\phi_{\lambda})^{-1}=\phi_{\lambda^{-1}}.
\]
Being invertible, the maps $\phi_\lambda$ allow to define {\it deformations} of the product $\cdot$ defined in \eqref{cdot}, as well as of the coproduct $\Delta$ defined in \eqref{coprodH} and of the counit. Namely we define $\cdot_\lambda: H\otimes H\to H$, $\Delta_\lambda:H\to H\otimes H$ and $\varepsilon_\lambda$ by
\begin{equation}\label{eq:deformation}
\begin{split}
	A\cdot_\lambda B \ & :=\phi_\lambda^{-1}(\phi_\lambda(A)\cdot\phi_\lambda(B)), 
\\
	\Delta_\lambda A \ &
			:=(\phi_\lambda^{-1}\otimes \phi_\lambda^{-1})\Delta\phi_\lambda A,
\\
	\varepsilon_\lambda (A) \ & :=\varepsilon\circ \phi_\lambda(A)=\lambda(A).
\end{split}
\end{equation}
Although $\varepsilon_\lambda=\lambda$, we find it useful to introduce the notation $\varepsilon_\lambda$ to feature the new role of $\lambda$ as a counit.

Notice that, as $\lambda(\emptyset)=1$, we have $\phi_\lambda(\emptyset )=\emptyset$ and $A\cdot_\lambda\emptyset =A$. Dually, 
$$
	(\varepsilon_\lambda\otimes\id)\circ\Delta_\lambda(A)=(\varepsilon\otimes\phi_{\lambda^{-1}})\Delta\phi_{\lambda}(A)=A.
$$
Then we have

\begin{theorem}\label{deformation}
For any $\lambda\in{\mathcal G}(H)$, the quintuple $(H,\cdot_\lambda,\emptyset,\Delta_\lambda,\varepsilon_\lambda)$ defines a Hopf algebra. The map 
\[
	\phi_\lambda^{-1}:(H,\cdot,\emptyset,\Delta,\varepsilon)\to
	(H,\cdot_\lambda,\emptyset,\Delta_\lambda,\varepsilon_\lambda)
\]
is an isomorphism of Hopf algebras.
\end{theorem}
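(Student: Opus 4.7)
The plan is to recognize that the deformed structure is nothing more than the transport of the original Hopf algebra structure along the linear bijection $\phi_\lambda : H \to H$. The preceding paragraph already establishes that $\lambda \mapsto \phi_\lambda$ is a group action of $(\mathcal{G}(H), \star)$ on $H$, so $\phi_\lambda$ has a linear inverse $\phi_\lambda^{-1} = \phi_{\lambda^{-1}}$. Once this is in hand, all six Hopf algebra axioms (associativity, unit, coassociativity, counit, bialgebra compatibility, antipode) transfer automatically from $(H,\cdot,\emptyset,\Delta,\varepsilon)$ to $(H,\cdot_\lambda,\emptyset,\Delta_\lambda,\varepsilon_\lambda)$ by conjugation, and the isomorphism statement becomes a tautology.

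First I would check the only piece that is not purely formal: the fact that $\phi_\lambda$ fixes $\emptyset$. Since $\Delta \emptyset = \emptyset\otimes\emptyset$ and $\lambda(\emptyset)=1$, one has $\phi_\lambda(\emptyset)=(\lambda\otimes\id)(\emptyset\otimes\emptyset)=\emptyset$, hence also $\phi_\lambda^{-1}(\emptyset)=\emptyset$. This ensures that the same element $\emptyset$ serves as the unit for $\cdot_\lambda$, since
\[
A\cdot_\lambda \emptyset=\phi_\lambda^{-1}(\phi_\lambda(A)\cdot\phi_\lambda(\emptyset))=\phi_\lambda^{-1}(\phi_\lambda(A))=A,
\]
and dually (as already noted in the excerpt) $\emptyset$ remains compatible with $\Delta_\lambda$ via $\varepsilon_\lambda$.

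Next I would re-read the three defining formulas in \eqref{eq:deformation} as the statement that $\phi_\lambda^{-1}$ intertwines the two structures; substituting $A=\phi_\lambda^{-1}(A')$, $B=\phi_\lambda^{-1}(B')$ and similar for the coproduct gives
\[
\phi_\lambda^{-1}(A'\cdot B')=\phi_\lambda^{-1}(A')\cdot_\lambda\phi_\lambda^{-1}(B'), \qquad \Delta_\lambda\circ\phi_\lambda^{-1}=(\phi_\lambda^{-1}\otimes\phi_\lambda^{-1})\circ\Delta,
\]
together with $\varepsilon=\varepsilon_\lambda\circ\phi_\lambda^{-1}$. With these three identities in hand, associativity, (co)commutativity, unit/counit and bialgebra compatibility of the deformed operations follow by a one-line conjugation argument from the corresponding axioms for the undeformed structure. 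For instance, associativity of $\cdot_\lambda$ is immediate from associativity of $\cdot$ after applying $\phi_\lambda$ to both sides of the desired identity, and similarly for coassociativity.

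Finally, the antipode is produced by transport: setting $S_\lambda:=\phi_\lambda^{-1}\circ S\circ\phi_\lambda$ and using the intertwining relations, one checks that $S_\lambda$ satisfies the antipode axiom for $(H,\cdot_\lambda,\emptyset,\Delta_\lambda,\varepsilon_\lambda)$ directly from the antipode axiom for $S$. (Alternatively, one can invoke the standard fact that a connected graded bialgebra is automatically a Hopf algebra, once one observes that $\Delta_\lambda$ remains compatible with the original grading or with a suitable filtration, since $\phi_\lambda$ preserves the filtration by $|A|$.) The second assertion, that $\phi_\lambda^{-1}$ is a Hopf algebra isomorphism, is then just a restatement of the intertwining identities. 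There is no substantive obstacle in the proof; the only mild point of care is the verification $\phi_\lambda(\emptyset)=\emptyset$, without which one would have to deform the unit as well.
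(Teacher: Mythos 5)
Your proposal is correct and follows essentially the same route as the paper, which likewise observes that the deformed structure is obtained by conjugation along $\phi_\lambda$ and then spells out the associativity, coassociativity and compatibility checks. The only cosmetic difference is in the antipode: the paper invokes connectedness (plus uniqueness of the antipode) rather than writing $S_\lambda=\phi_\lambda^{-1}\circ S\circ\phi_\lambda$ explicitly, and your remark that the deformed coproduct is only compatible with the filtration by $|A|$ rather than the grading is, if anything, slightly more careful than the paper's wording.
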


\begin{proof}Although the Theorem follows directly from the properties of conjugacy, we detail the proof.
Associativity of $\cdot_\lambda$ and coassociativity of $\Delta_\lambda$ follow directly. First,
\[
	(A\cdot_\lambda B)\cdot_\lambda C 
	= \phi_\lambda^{-1}(\phi_\lambda(A)\cdot\phi_\lambda(B)\cdot\phi_\lambda(C))
	=A\cdot_\lambda (B\cdot_\lambda C),
\]
which shows associativity. Coassociativity is simple to see as well 
\[
\begin{split}
	(\Delta_\lambda\otimes\id)\Delta_\lambda A
	&=(\phi_\lambda^{-1}\otimes \phi_\lambda^{-1}\otimes \phi_\lambda^{-1})(\Delta\otimes\id)\Delta\phi_\lambda A\\
	& = (\phi_\lambda^{-1}\otimes \phi_\lambda^{-1}\otimes \phi_\lambda^{-1})(\id\otimes\Delta)\Delta\phi_\lambda A \\
	&= (\id\otimes\Delta_\lambda)\Delta_\lambda A.
\end{split}
\]
We check now the compatibility relation between $\cdot_\lambda$ and $\Delta_\lambda$: 
\[
\begin{split}
	(\Delta_\lambda A)\cdot_\lambda(\Delta_\lambda B) 
	& = (\phi_\lambda^{-1}\otimes\phi_\lambda^{-1}) \Big(
	\big((\phi_\lambda\otimes\phi_\lambda)\Delta_\lambda A\big)\cdot
	\big((\phi_\lambda\otimes\phi_\lambda)\Delta_\lambda B\big)\Big)
	\\ & = (\phi_\lambda^{-1}\otimes\phi_\lambda^{-1}) 
	\big((\Delta\phi_\lambda A) \cdot (\Delta\phi_\lambda B)\big)
	\\ & = (\phi_\lambda^{-1}\otimes\phi_\lambda^{-1}) 
	\Delta\left(\phi_\lambda A \cdot \phi_\lambda B\right)
	\\ &= \Delta_\lambda(A\cdot_\lambda B).
\end{split}
\]
Finally, we check that $\phi_\lambda^{-1}:(H,\cdot,\emptyset, \Delta,\varepsilon)\to(H,\cdot_\lambda,\emptyset,\Delta_\lambda,\varepsilon_\lambda)$ is a bialgebra morphism:
\[
	\phi_\lambda^{-1}(A\cdot B)=\phi_\lambda^{-1}(A)\cdot_\lambda\phi_\lambda^{-1}(B),
\]
\[
	(\phi_\lambda^{-1}\otimes\phi_\lambda^{-1})\Delta A = \Delta_\lambda\phi_\lambda^{-1} A.
\]

We have proved until now that $\phi_\lambda^{-1}$ is a isomorphism of bialgebras. Since $(H,\cdot_\lambda,\emptyset,\Delta_\lambda,\varepsilon_\lambda)$ is a graded connected bialgebra, it has an antipode. Since moreover the 
antipode of a Hopf algebra is unique, we obtain that $\phi_\lambda^{-1}$ preserves the antipode as well.
\end{proof}

\begin{remark}\label{import}{\rm 
The construction of \eqref{eq:deformation} and Theorem \ref{deformation} works also if we 
replace $\phi_\lambda$ with any linear invertible 
map $\phi:H\to H$ such that $\phi(\emptyset)=\emptyset$. Indeed, in the above 
considerations we have never used the formula \eqref{phi} which defines $\phi_\lambda$.
}
\end{remark}

In the particular case of $\lambda=\mu$, where $\mu$ is the moment functional defined in \eqref{muka}, we obtain by Theorem \ref{wick:repr} and Theorem \ref{deformation}:

\begin{theorem}\label{ww}
The Wick product map $W(A)=\Wick{A}$ is equal to $\phi_{\mu^{-1}}$. Therefore $W:(H,\cdot,\emptyset,\Delta,\varepsilon)\to(H,\cdot_{\mu},\emptyset,\Delta_{\mu},\varepsilon_\mu)$ is a Hopf algebra isomorphism, in particular
\[
	\Wick{A_1\cdot A_2} = \Wick{A_1} \cdot_\mu \Wick{A_2}, 
\]
for $A_1,A_2\in H$.
\end{theorem}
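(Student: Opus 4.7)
The proof is essentially a concatenation of two observations already established in the paper, so the plan is mostly bookkeeping rather than calculation.

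First I would identify $W$ with $\phi_{\mu^{-1}}$ directly from the definitions. By Theorem \ref{wick:repr}, $W(A) = (\mu^{-1}\otimes\id)\Delta A$, while the action $\phi_\lambda$ of \eqref{phi} is by definition $\phi_\lambda(A) = (\lambda\otimes\id)\Delta A$. Setting $\lambda = \mu^{-1}$ gives $\phi_{\mu^{-1}} = W$ on the nose. This is the content of the first sentence of the theorem; no computation is required beyond unfolding notation.

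Next I would invoke the group-action property established in Section \ref{sect:deform}: since $\phi_{\lambda_1 \star \lambda_2} = \phi_{\lambda_1}\circ\phi_{\lambda_2}$ and $\phi_\varepsilon = \id$, one has $(\phi_\lambda)^{-1} = \phi_{\lambda^{-1}}$. Applied to $\lambda = \mu$ this yields $\phi_\mu^{-1} = \phi_{\mu^{-1}} = W$. Theorem \ref{deformation}, specialized to $\lambda = \mu$, then asserts precisely that
\[
\phi_\mu^{-1}:(H,\cdot,\emptyset,\Delta,\varepsilon)\longrightarrow (H,\cdot_\mu,\emptyset,\Delta_\mu,\varepsilon_\mu)
\]
is an isomorphism of Hopf algebras, so the same holds for $W$.

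Finally, the multiplicativity identity $\Wick{A_1\cdot A_2} = \Wick{A_1}\cdot_\mu \Wick{A_2}$ is simply the statement that $W$ is an algebra morphism between the two algebra structures on $H$, which is part of the Hopf algebra isomorphism just established. Concretely, from the definition $A\cdot_\mu B = \phi_\mu^{-1}(\phi_\mu(A)\cdot\phi_\mu(B))$ with $\phi_\mu^{-1} = W$ and $\phi_\mu = W^{-1}$, one reads $W(A_1)\cdot_\mu W(A_2) = W(W^{-1}(W(A_1))\cdot W^{-1}(W(A_2))) = W(A_1\cdot A_2)$.

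I do not expect any obstacle: once Theorems \ref{wick:repr} and \ref{deformation} are in place, the whole argument is a change of notation. The only thing worth double-checking is that the counit matches, i.e. that $\varepsilon_\mu = \mu$ as asserted in \eqref{eq:deformation}, so that the Hopf algebra structure on the target really is the one stated; this is immediate from $\varepsilon_\mu(A) = \varepsilon(\phi_\mu(A)) = \varepsilon((\mu\otimes\id)\Delta A) = \mu(A)$ using $\varepsilon(B) = \ind{(B=\emptyset)}$ and the counit axiom for $\Delta$.
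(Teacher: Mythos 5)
Your proposal is correct and follows exactly the route the paper takes: the paper states Theorem \ref{ww} as an immediate consequence of Theorem \ref{wick:repr} (identifying $W=\mu^{-1}\star\id=\phi_{\mu^{-1}}$) and Theorem \ref{deformation} specialized to $\lambda=\mu$, using $\phi_{\mu}^{-1}=\phi_{\mu^{-1}}$ from the group-action property. Your extra verification that $\varepsilon_\mu=\mu$ is a harmless (and correct) unpacking of \eqref{eq:deformation}, already built into the statement of Theorem \ref{deformation}.
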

More generally, we obtain for any $A_1,\ldots,A_n\in H$  that 
\begin{equation}\label{wickdef}
	\Wick{A_1\cdots A_n} = \Wick{A_1} \cdot_\mu\  \cdots \ \cdot_\mu \Wick{A_n}. 
\end{equation}

\medskip
We notice at last an interesting additional  result expressing abstractly compatibility relations between the two Hopf algebra structures on $H$ (see also Proposition \ref{com2} below). We recall that a linear space $M$ is a left comodule over the coalgebra $(H,\Delta,\varepsilon)$ if there is linear map $\rho:M\to H\otimes M$ such that
\begin{equation}\label{deltarho}
	(\Delta\otimes\id_M)\rho
	=(\id_H\otimes\rho)\rho, \qquad (\varepsilon\otimes\id_M)\rho
	= \id_M.
\end{equation}
A left comodule endomorphism of $M$ is then a linear map $f:M\to M$ such that
\[
	\rho \circ f=(\id_H\otimes f)\rho.
\]
In particular the coalgebra $(H,\Delta,\varepsilon)$ is a left comodule over itself, with $\rho=\Delta$. 

\begin{proposition}\label{com1}
If we consider $H$ as a left comodule over itself, then $\phi_\lambda$ is a left comodule morphism for all linear $\lambda:H\to\R$, namely
\begin{equation}\label{deltapartial}
	\Delta\phi_\lambda = (\id\otimes\phi_\lambda)\Delta.
\end{equation}
In particular the Wick product map $W$ is a left comodule endomorphism of $(H,\Delta,\varepsilon)$.
\end{proposition}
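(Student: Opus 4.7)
The plan is to reduce the identity \eqref{deltapartial} to a combination of coassociativity and cocommutativity of $(H,\Delta,\varepsilon)$, both provided by Proposition \ref{prop:Hopf}. The key preliminary observation, already implicit in the remark $\phi_\lambda = \lambda \star \id = \id \star \lambda$ made just after \eqref{phi}, is that cocommutativity of $\Delta$ lets one move $\lambda$ from the first to the second tensor slot, so that we also have the alternative expression
\[
\phi_\lambda \;=\; (\id \otimes \lambda) \circ \Delta .
\]

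Starting from this rewriting, I would compute
\[
\Delta \circ \phi_\lambda \;=\; \Delta \circ (\id \otimes \lambda) \circ \Delta \;=\; (\id \otimes \id \otimes \lambda) \circ (\Delta \otimes \id) \circ \Delta ,
\]
where the second equality just records that $\Delta$ and $\lambda$ act on disjoint tensor factors. Coassociativity $(\Delta \otimes \id)\Delta = (\id \otimes \Delta)\Delta$ then rewrites the right-hand side as
\[
(\id \otimes \id \otimes \lambda) \circ (\id \otimes \Delta) \circ \Delta \;=\; \bigl(\id \otimes ((\id \otimes \lambda)\circ \Delta)\bigr)\circ \Delta \;=\; (\id \otimes \phi_\lambda)\circ \Delta ,
\]
which is exactly \eqref{deltapartial}. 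For the second assertion I would invoke Theorem \ref{ww}, which identifies $W$ with $\phi_{\mu^{-1}}$, so the general statement applied with $\lambda = \mu^{-1}$ gives that $W$ is a left comodule endomorphism of $(H,\Delta,\varepsilon)$.

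I do not foresee a genuine obstacle: the argument is essentially a two-line manipulation once cocommutativity has been used to rewrite $\phi_\lambda$. The one subtle point is precisely that initial rewriting; performing the same calculation directly from $\phi_\lambda = (\lambda \otimes \id)\Delta$ would yield only the right-comodule identity $\Delta \circ \phi_\lambda = (\phi_\lambda \otimes \id)\circ \Delta$. The actual content of the proposition beyond coassociativity is therefore that the cocommutativity of $H$ forces the left- and right-comodule formulations to coincide.
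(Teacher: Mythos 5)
Your proof is correct and follows essentially the same route as the paper: both arguments reduce \eqref{deltapartial} to coassociativity plus cocommutativity of $\Delta$ via a short tensor-calculus chain, the only cosmetic difference being that you invoke cocommutativity once at the start to write $\phi_\lambda=(\id\otimes\lambda)\Delta$, whereas the paper applies it in the middle tensor slot. Your closing remark that without cocommutativity one would only obtain the right-comodule identity is accurate and matches the role the hypothesis plays in the paper's proof.
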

\begin{proof}
We have
\[
\begin{split}
\Delta\phi_\lambda & = (\lambda\otimes\id\otimes\id)(\id\otimes\Delta)\Delta 
\\ & = (\lambda\otimes\id\otimes\id)(\Delta\otimes\id)\Delta 
\\ & = (\id\otimes\lambda\otimes\id)(\Delta\otimes\id)\Delta 
\\ & = (\id\otimes\lambda\otimes\id)(\id\otimes\Delta)\Delta 
\\ & = (\id\otimes\phi_\lambda)\Delta,
\end{split}
\]
where we have used, in this order, coassociativity, cocommutativity and then 
coassociativity again. 
\end{proof}

%%%%%%%%%%%%%%%%%%%%%%%%%%%%%%%%%
%%%%%%%%%%%%%%%%%%%%%%%%%%%%%%%%%

\section{Wick products as Hopf algebra deformations}
\label{sect:anoterway}

Let $a\in\CA$. We define now the functional $\zeta_a:H\to \R$ given by $\zeta_a(A):=\ind{(A=\{a\})}$, for every $A\in\MS(\CA)$. Then we define the operator $\partial_a:H\to H$ as $\partial_a :=\zeta_a\star\id=\phi_{\zeta_a}$ in the notation \eqref{phi}, namely
\[
	\partial_a A=(\zeta_a\otimes\id)\Delta A.
\]
It is simple to see that $\partial_a$ acts as a formal partial derivation with respect to 
$a$, namely it satisfies for $A,B\in\MS(\CA)$ and $a,b\in\CA$
\[
	\partial_a \{b\}=\ind{(a=b)}\emptyset, \qquad 
	\partial_a(A\cdot B)=\partial_a(A)\cdot B+A\cdot \partial_a(B),
\]
since $\zeta_a$ satisfies $\zeta_a(\emptyset)=0$ and $\zeta_a(A\cdot B)=\zeta_a(A)\varepsilon(B)+\varepsilon(A)\zeta_a(B)$, namely $\zeta_a$ is an infinitesimal character. 
Recall that the product $A \cdot B$ has been defined in \eqref{cdot} and that $\{b\}$ is identified with
$\ind{\{b\}}\in H$.

Then the following result is a reformulation in our setting of \cite[Proposition 3.4]{Luk-Marc}.
\begin{theorem}\label{other}
The family of polynomials $(\!\Wick{A}, A\in\MS(\CA))$ is the only collection such that $\Wick{{\emptyset}} = \emptyset$ and for all non-null $A\in\MS(\CA)$ and $a\in\CA$
\begin{equation} 
\label{defWick3}
	{\partial_a} \Wick{ A} = \ \Wick{{\partial_a} A} 
	\quad \mathrm{and} \quad
	\mu(\!\Wick{A}\!)=0.
\end{equation}
\end{theorem}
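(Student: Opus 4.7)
The plan is to prove both existence (the Wick polynomials satisfy the two conditions) and uniqueness separately, the second being the substantive part.

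For existence, I would rely on the Hopf-algebraic description $W = \phi_{\mu^{-1}} = \mu^{-1} \star \id$ from Theorem \ref{wick:repr}, together with $\partial_a = \phi_{\zeta_a} = \zeta_a \star \id$. Since $H$ is cocommutative, the convolution product $\star$ on $H^\ast$ is commutative, so $\mu^{-1} \star \zeta_a = \zeta_a \star \mu^{-1}$, and consequently
\[
\partial_a \circ W \;=\; \phi_{\zeta_a} \circ \phi_{\mu^{-1}} \;=\; \phi_{\zeta_a \star \mu^{-1}} \;=\; \phi_{\mu^{-1} \star \zeta_a} \;=\; W \circ \partial_a,
\]
which immediately gives $\partial_a \Wick{A} = \Wick{\partial_a A}$. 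For the vanishing condition, I would compute
\[
\mu(W(A)) \;=\; (\mu \otimes W)(\emptyset \otimes A) \text{-style}\; \ldots \;=\; (\mu^{-1} \star \mu)(A) \;=\; \varepsilon(A),
\]
using that for $\lambda \in H^\ast$, $\lambda \circ (\mu^{-1} \star \id) = \mu^{-1} \star \lambda$ as elements of $H^\ast$. This vanishes whenever $A \neq \emptyset$.

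For uniqueness, I would argue by induction on $|A|$. The base case $|A|=0$ is given. For the inductive step, suppose $P_B = \Wick{B}$ for all $|B|<n$ and let $|A|=n$. A direct computation with the coproduct \eqref{coprodH} shows that for $a \in \CA$ with $A(a) \geq 1$ one has $\partial_a A = A(a)\,(A - \ind{(a)})$ in $H$, and $\partial_a A = 0$ otherwise; in particular every term appearing has degree $n-1$. Hypothesis $\partial_a P_A = P_{\partial_a A}$ combined with the inductive hypothesis yields $\partial_a P_A = \partial_a \Wick{A}$ for all $a \in \CA$.

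The decisive point is then that the kernel of all the $\partial_a$, $a \in \CA$, acting on $H$ is the linear span of $\emptyset$. This follows from the algebra isomorphism between $H$ and the polynomial algebra in the variables $x_a$, $a \in \CA$, under which each $\partial_a$ corresponds to the usual partial derivative $\partial/\partial x_a$, so a polynomial killed by all partial derivatives is a constant. Hence $P_A - \Wick{A} = c\,\emptyset$ for some $c \in \R$. Applying $\mu$ and using $\mu(P_A) = 0 = \mu(\Wick{A})$ together with $\mu(\emptyset) = 1$ forces $c = 0$, closing the induction.

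The only subtle step is verifying the kernel statement for $\bigcap_a \ker \partial_a$; once the identification of $\partial_a$ with the formal derivative is made (which is exactly the Leibniz property stated in the paragraph preceding the theorem), this is immediate from standard polynomial algebra, so I do not foresee a genuine obstacle.
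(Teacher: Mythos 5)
Your proof is correct and takes essentially the same route as the paper: existence follows from commutativity and associativity of $\star$ (giving $\partial_a\circ W=W\circ\partial_a$) together with $\mu\circ W=\mu^{-1}\star\mu=\varepsilon$, exactly as in the paper's argument. For uniqueness the paper merely asserts that \eqref{defWick3} determines the family by recurrence, and your induction on $|A|$, using that the common kernel of the $\partial_a$ in $H$ is $\R\,\emptyset$ and that $\mu(\emptyset)=1$ fixes the constant, is a correct and welcome filling-in of that one-line claim.
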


\begin{proof}
Since $\mu \in {\mathcal G}(H)$ equation \eqref{W} implies
\[
	\mu(\!\Wick{A}\!) = (\mu^{-1}\star\mu)(A)=\varepsilon(A)=\ind{(A=\emptyset)}.
\]
Using \eqref{deltapartial} for $\lambda=\zeta_a$ we obtain
\[
	\Delta\partial_a = (\id\otimes\partial_a)\Delta.
\]
We conclude from \eqref{W} that 
\[
{\partial_a} \Wick{ A} = (\zeta_a\star \mu^{-1}\star\id )(A) = 
(\mu^{-1}\star\zeta_a\star\id) (A) = \Wick{{\partial_a} A} 
\]
by the associativity and commutativity of $\star$. Therefore
$\Wick{A}$ satisfies \eqref{defWick3}. The converse follows 
from the fact that \eqref{defWick3} defines by recurrence a unique family.
\end{proof}

%%%%%%%%%%%%%%%%%%%%%%%%%%%%%%%%%

\subsection{Back to simple subsets}
\label{ssect:simple}

As in Subsection \ref{ssect:subsets}, we can restrict the whole discussion to Wick polynomials associated to finite sets $B\in\MS(\CA)\cap\{0,1\}^\CA$ and their linear span $J$. Indeed, if $A\in J$ then $\Wick{A}=W(A)$ also belongs to $J$ and is defined by the recursion
\[
	A  = \sum_{B_1\cdot B_2=A}  \E(X^{B_1})  \Wick{{B_2}}.
\]
As in Theorem \ref{wick:repr}, we have $W=\mu^{-1}\star\id$ and $\id=\mu\star W$, and, as 
in Proposition \ref{rec},
\[
\begin{split}
	& \Wick{A} = A + 
	\\ & +  \sum_{n\geq 1} (-1)^n \sum_{B\in\MS(\CA)} 
	\sum_{B_1,\ldots,B_{n}\in\MS(\CA)\setminus\{\emptyset\}} 
	\ind{(B\cdot B_1\cdots B_n=A)} \, \mu(B_1)\cdots\mu(B_n)\,  B
\end{split}
\]
for all $A\in \MS(\CA)\cap\{0,1\}^\CA$.

However, as we have seen in Section \ref{sect:deform} above, it is more interesting to work on 
the bialgebra $H$ than on the coalgebra $J$, see in particular Theorem \ref{ww}.

%%%%%%%%%%%%%%%%%%%%%%%%%%%%%%%%%
%%%%%%%%%%%%%%%%%%%%%%%%%%%%%%%%%

\section{On the inverse of unital functionals}
\label{sect:inverse}

As we have seen in Theorem \ref{wick:repr}, the element $\mu^{-1}\in{\mathcal G}(H)$ plays
an important role in the Hopf algebraic representation \eqref{W} of Wick products. From \eqref{inverse1} we obtain a general way to compute $\mu^{-1}$. 
Let us consider now a linear functional $\lambda:H\to\R$ which is also a unital algebra morphism (or {\it character}), namely such that $\lambda(\emptyset)=1$ and $\lambda(A\cdot B)=\lambda(A)\lambda(B)$ for all $A,B\in H$. Then there is a simple way to compute its inverse: namely as $\lambda^{-1}=\lambda\circ S$, where $S:H\to H$ is the {\it antipode}, i.e. the only linear map such that 
\[
	S\star \id=\id\star S=\emptyset \, \varepsilon,
\]
where $\emptyset$ is the unit and $\varepsilon$ the counit of $H$.
%It is well known that for a special class of unital functionals, namely for {\it characters}, the inverse $\lambda^{-1}$ has a simpler expression and is equal to $\lambda\circ S$,
%where $S:H\to H$ is the antipode of $H$. 
However the functional $\mu$ we are interested in 
is not a character (moments are notoriously
not multiplicative in general) and this simple representation is not available. 

The aim of this section is to provide an alternative
antipode formula, allowing to write $\mu^{-1}$ as $\hat\mu\circ \hat S$, where $\hat S:\hat H\to \hat H$ is the antipode of 
another Hopf algebra $\hat H$, $\hat\mu:\hat H\to{\mathbb R}$ is a suitable linear functional
and $H$ is endowed with a comodule structure over $\hat H$, see \eqref{invant} for  
the precise formulation. 
This way of representing $\mu^{-1}$ by means of a comodule structure
is directly inspired by \cite{bhz,Hai-Cha}, see Section \ref{sect:final}. 

%However, this does not seem to help in the computation of $\mu$, since moments are notoriously
%not multiplicative in general. This problem is circumvented by extending $\mu$ to a character $\hat\mu$ defined on  a larger Hopf algebra $\hat H$ which is constructed from $H$, such that the inverse $\hat\mu^{-1}$ will be computed via the antipode of $\hat H$. 

\begin{definition}\label{iota} 
Let $\hat H$ be the free commutative unital algebra (the algebra of polynomials) generated by $\MS(\CA)$. We denote by $\bullet$ the product in $\hat H$ and we define the coproduct $\hat \Delta:\hat H\to \hat H\otimes \hat H$ given by $\hat\Delta (\iota A) = (\iota\otimes\iota)\Delta A$ and 
\[
	\hat\Delta(A_1\bullet A_2\bullet \cdots\bullet A_n) 
	= (\hat\Delta A_1)\bullet (\hat\Delta A_2)\bullet \cdots \bullet(\hat\Delta A_n),
\]
where $\iota: H \to \hat H$ is the canonical injection (which we will omit whenever this does not cause confusion). The unit of $\hat H$ is $\emptyset$ and the counit is defined by $\hat\varepsilon(A_1\bullet A_2\bullet \cdots\bullet A_n)=\varepsilon(A_1)\cdots\varepsilon(A_n)$.
\end{definition}

Since $\hat H$ is a polynomial algebra, $\hat\Delta$ is well-defined by specifying its action on the elements of $\MS(\CA)$, and requiring it to be multiplicative. It turns the space $\hat H$ into a connected graded Hopf algebra, where the grading is
\[
	|A_1\bullet A_2\bullet \cdots\bullet A_n|:=|A_1|+|A_2|+\cdots+|A_n|.
\]

The antipode $\hat S:\hat H\to \hat H$ of $\hat H$ can be computed by recurrence with the classical formula
\[
	\hat S A = - A - \sum_{B_1,B_2 \in\MS(\CA)\setminus \{\emptyset\}} 
	\binom{A}{B_1\, B_2} \left[\hat S B_1\right] \bullet B_2,
\]
where we dropped the injection $\iota$ for notational convenience. A closed formula for $\hat S$ follows
\begin{equation}\label{antip}
	\hat S A = -A+\sum_{n\geq 2}   
	(-1)^n \sum_{B_1,\ldots,B_n\not=\emptyset} \binom{A}{B_1\dots B_n} 
	\, B_1\bullet B_2 \bullet \cdots \bullet B_n.
\end{equation}
We denote by ${\mathcal C}(\hat H)$ the set of characters on $\hat H$. This is a group for the $\hat\star$ convolution, dual to $\hat\Delta$.

\begin{proposition}\label{restriction}
The restriction map $R:{\mathcal C}(\hat H)\to{\mathcal G}(H)$, $R\hat\lambda:=\hat\lambda|_H$ defines a group isomorphism.
\end{proposition}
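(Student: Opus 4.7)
The plan is to verify three things in turn: that $R$ is well-defined into $\mathcal{G}(H)$, that it is a morphism of groups, and that it is a bijection. Each follows rather directly from the defining features of $\hat H$, the key point being that $\hat H$ is the \emph{free} commutative unital algebra on $\mathsf{MS}(\mathcal A)$ (which is also a linear basis of $H$), together with the compatibility $\hat\Delta \circ \iota = (\iota\otimes\iota)\Delta$.

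For well-definedness, let $\hat\lambda \in \mathcal C(\hat H)$. Since $\hat\lambda$ is a character and $\emptyset$ is the unit of $\hat H$, one has $(R\hat\lambda)(\emptyset) = \hat\lambda(\emptyset) = 1$, so that $R\hat\lambda \in \mathcal G(H)$. For the homomorphism property, one uses the fundamental relation $\hat\Delta(\iota A) = (\iota \otimes \iota)\Delta A$ from Definition~\ref{iota}: for $\hat\lambda_1,\hat\lambda_2 \in \mathcal C(\hat H)$ and $A \in H$,
\[
R(\hat\lambda_1 \mathbin{\hat\star} \hat\lambda_2)(A) = (\hat\lambda_1 \otimes \hat\lambda_2)\,\hat\Delta(\iota A) = (\hat\lambda_1 \otimes \hat\lambda_2)(\iota \otimes \iota)\Delta A = (R\hat\lambda_1 \star R\hat\lambda_2)(A).
\]
Since the identity of $\mathcal C(\hat H)$ is $\hat\varepsilon$, which by definition restricts to $\varepsilon$, this makes $R$ a group morphism.

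For bijectivity, I would invoke the universal property of the free commutative unital algebra. A character $\hat\lambda \in \mathcal C(\hat H)$ is entirely determined by its values on the generating set $\mathsf{MS}(\mathcal A)$, and conversely every set map $\mathsf{MS}(\mathcal A) \to \mathbb R$ sending $\emptyset$ to $1$ extends uniquely to a character on $\hat H$ (the condition on $\emptyset$ being necessary and sufficient to be compatible with the identification of $\emptyset$ as the algebra unit of $\hat H$). On the other hand $\mathsf{MS}(\mathcal A)$ is a linear basis of $H$, so elements of $\mathcal G(H)$ correspond bijectively to the same set of maps $\mathsf{MS}(\mathcal A) \to \mathbb R$ with $\emptyset \mapsto 1$. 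By construction the map $R$ realises this correspondence, hence is bijective.

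The argument is essentially formal, so there is no serious obstacle: the only thing to be careful about is the identification, inside $\hat H$, of the generator $\emptyset \in \mathsf{MS}(\mathcal A)$ with the algebra unit, which is precisely what aligns the two normalisation conditions $\hat\lambda(\emptyset)=1$ and $\lambda(\emptyset)=1$ and makes the universal-property argument for bijectivity go through cleanly.
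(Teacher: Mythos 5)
Your proof is correct and takes essentially the same route as the paper's: the group-morphism property via the computation $R(\hat\alpha\,\hat\star\,\hat\beta)(A)=(\hat\alpha\otimes\hat\beta)\hat\Delta A=(\hat\alpha|_H\otimes\hat\beta|_H)\Delta A=(R\hat\alpha\star R\hat\beta)(A)$, and bijectivity from the fact that a character of the free commutative unital algebra $\hat H$ is uniquely determined by, and can be freely prescribed on, the generators $\MS(\CA)$, which also form a linear basis of $H$. Your additional remarks on well-definedness and on identifying the generator $\emptyset$ with the unit of $\hat H$ only make explicit points the paper leaves implicit.
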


\begin{proof}
The map is clearly bijective, since a character on $\hat H$ is uniquely determined by its values on $H$, and every $\lambda\in {\mathcal G}(H)$ gives rise in this way to a $\hat\lambda\in{\mathcal C}(\hat H)$ such that $R\hat\lambda=\lambda$.

It remains to show that $R$ is a group morphism. This follows from 
\[
	R(\hat\alpha\ \hat\star\ \hat\beta)(A)=(\hat\alpha\otimes\hat\beta)\hat\Delta A= 
	(\hat\alpha|_H\otimes\hat\beta|_H)\Delta A= (R\hat\alpha)\star(R\hat\beta)(A),
\]
where $\hat\alpha,\hat\beta\in{\mathcal C}(\hat H)$ and $A\in H$.
\end{proof}

For all $\lambda\in{\mathcal G}(H)$ we write $\hat\lambda$ for the only character on $\hat H$ which is mapped to $\lambda$ by the isomorphism $R$. By the previous proposition we obtain, in particular, that $(\hat\lambda)^{-1}|_H=\lambda^{-1}$ for all $\lambda\in{\mathcal G}(H)$. Since $\hat\lambda$ is a character on $\hat H$, we have 
$(\hat\lambda)^{-1}=\hat\lambda\circ \hat S$. Therefore 
\begin{equation}\label{invant}
	\lambda^{-1}=(\,\hat\lambda\circ \hat S\,)|_H.
\end{equation}
This formula can be used specifically to compute the inverse $\mu^{-1}$ of the functional $\mu$ in \eqref{W}.

%%%%%%%%%%%%%%%%%%%%%%%%%%%%%%%%%

\subsection{A comodule structure}
\label{ssect:comodule}

The above considerations suggest that we can introduce the following additional structure: if we define $\delta:H\to\hat H\otimes H$, $\delta:=(\iota\otimes\id)\Delta$, where $\iota:H\to\hat H$ is the canonical injection of Definition \ref{iota}, then $H$ is turned
into a {\it left comodule} over $\hat H$, namely we have
\begin{equation}
\label{comodule}
	(\hat\Delta\otimes\id_H)\,\delta=(\id_{\hat H}\otimes\delta)\,\delta, \qquad
	\id_H=(\hat\varepsilon\otimes\id_H)\,\delta,
\end{equation}
see \eqref{deltarho} above. Note that \eqref{comodule} is in fact just the coassociativity and counitality of $\Delta$ on $H$ in disguise. 

Then we can rewrite the Hopf algebraic representation \eqref{W} of Wick polynomials as follows:
\begin{equation}\label{W2}
	\Wick{A}=(\hat\mu\circ\hat S\otimes\id)\,\delta A, 
\end{equation}
for $A\in H$, where $\hat\mu$ is the $\bullet$-multiplicative extension of $\mu$ from $H$ to $\hat H$.
Expanding this formula by means of 
the closed formula for $\hat S$, one recovers, by different means, Proposition  \ref{rec}.

\medskip
From Proposition \ref{com1} above, we obtain

\begin{proposition}\label{com2}
We define the action of ${\mathcal C}(\hat H)$ on $H$ by
\[
	\psi_{\hat\lambda}:H\to H, \qquad \psi_{\hat\lambda}(A)
	=(\hat\lambda\otimes\id)\,\delta A, \qquad \hat\lambda\in {\mathcal C}(\hat H),
\] 
for $A\in H$. Then $\psi_{\hat\lambda}$ is comodule morphism for all $\hat\lambda\in{\mathcal C}(\hat H)$, namely
\[
\delta\circ\psi_{\hat\lambda}=(\id_{\hat H}\otimes\psi_{\hat\lambda})\, \delta.
\]
\end{proposition}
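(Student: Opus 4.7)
The plan is to reduce the comodule-morphism identity to the already-established Proposition \ref{com1} by observing that $\psi_{\hat\lambda}$, regarded as an endomorphism of $H$, coincides with the map $\phi_\lambda$ of Section \ref{sect:deform} for $\lambda := R\hat\lambda = \hat\lambda|_H$. Indeed, unwinding the definitions,
\[
  \psi_{\hat\lambda}(A) = (\hat\lambda \otimes \id)\,\delta A
  = (\hat\lambda \otimes \id)(\iota \otimes \id)\Delta A
  = (\hat\lambda \circ \iota \otimes \id)\Delta A
  = (\lambda \otimes \id)\Delta A = \phi_\lambda(A),
\]
since by Proposition \ref{restriction} we have $\hat\lambda \circ \iota = \lambda$. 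Note that for this step we do not even need $\hat\lambda$ to be a character; only its restriction to $H$ matters.

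Next, I would compose with $\delta$ and push $\Delta$ through $\phi_\lambda$ using Proposition \ref{com1}:
\[
  \delta \circ \psi_{\hat\lambda}
  = (\iota \otimes \id)\,\Delta \circ \phi_\lambda
  = (\iota \otimes \id)(\id \otimes \phi_\lambda)\,\Delta
  = (\iota \otimes \phi_\lambda)\,\Delta.
\]
Rearranging the tensor factors,
\[
  (\iota \otimes \phi_\lambda)\,\Delta
  = (\id_{\hat H} \otimes \phi_\lambda)(\iota \otimes \id)\,\Delta
  = (\id_{\hat H} \otimes \phi_\lambda)\,\delta
  = (\id_{\hat H} \otimes \psi_{\hat\lambda})\,\delta,
\]
which is the desired identity.

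There is essentially no obstacle here: the statement is a cosmetic upgrade of Proposition \ref{com1} to the comodule setting, the upgrade being implemented by the injection $\iota : H \to \hat H$. The only point worth noting is that characters of $\hat H$ act on $H$ through their restriction to $H$, so the entire content of the compatibility is already carried by the cocommutativity and coassociativity of $\Delta$ that were invoked to prove Proposition \ref{com1}; the multiplicativity of $\hat\lambda$ with respect to $\bullet$ plays no role in the computation.
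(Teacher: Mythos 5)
Your proof is correct and follows exactly the route the paper intends: Proposition~\ref{com2} is stated there as an immediate consequence of Proposition~\ref{com1}, and your argument simply spells out that deduction via the identification $\psi_{\hat\lambda}=\phi_{\lambda}$ with $\lambda=\hat\lambda|_H=\hat\lambda\circ\iota$ and the factorization $\delta=(\iota\otimes\id)\Delta$. Your added remark that the character property of $\hat\lambda$ is never used is also accurate, since Proposition~\ref{com1} holds for arbitrary linear functionals.
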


%%%%%%%%%%%%%%%%%%%%%%%%%%%%%%%%%
%%%%%%%%%%%%%%%%%%%%%%%%%%%%%%%%%

\section{Deformation of pointwise multiplication}
\label{sect:pointwise}

We show now that the ideas of the previous sections can be generalized and used to
define deformations of other products. The main example for us is the pointwise product on
functions $f:\R^d\to\R$, and we explain in the next sections how these ideas appear in
regularity structures.

Let us consider a fixed family $T=(\tau_i, \, i\in I)$. We denote by $\T$ the free commutative monoid on $T$, with commutative product $\odot$ and neutral element $\emptyset \in \T \setminus T$. We define also $(C,\odot,\emptyset)$ as the unital free commutative algebra generated by $T$; then $C$ is the vector space freely generated by $\T$.

 Elements of $\T$ are commutative monomials in the elements of $T$, i.e. a generic element in $\T$ is of the form
  \[ \tau = \tau_{i_1}\dotsm\tau_{i_k} \]
  where $\tau_{i_1},\dotsc,\tau_{i_k}\in T$ and juxtaposition denotes their commutative product in $\T$. The empty set $\emptyset$ plays the role of the unit.
  Elements of the free commutative algebra $C$ are simply linear combinations of these monomials.
  In this context the $\odot$ product is the bilinear extension to $C$ of the product in the monoid $\T$.

We denote now by $\CC:=\CC(\R^d)$ the space of continuous functions $f:\R^d\to\R$, for a fixed $d\geq 1$. We endow $\CC$ with the associative commutative product $\cdot$ given by the pointwise multiplication and consider the spaces  
\[
\CC^T:=\{\Pi:T\to\CC\}, \qquad \CC^\T:=\{\Gamma:\T\to\CC\},
\] 
of functions from $T$, respectively $\T$, to $\CC$. Any function $\Gamma\in\CC^\T$ can be uniquely extended to a linear map $\Gamma:C\to\CC$. One can think of $\CC^T$ as a space of $T$-indexed functions: this is typically what happens in perturbative expansions indexed by combinatorial objects (sequences, in usual Taylor expansions or in Lyons' classical theory of geometric rough paths, or more complex objects such as trees or forests, as in Gubinelli's theory of non-geometric rough paths or in Hairer's theory of regularity structures, for example).

When deforming perturbative expansions parametrized by combinatorial objects, it is useful and often necessary to keep track of the combinatorial indices since the deformation of the various terms of the expansion will depend in practice on both the terms themselves and on their indices. 
To implement this idea, we will, for $\Pi\in \CC^T$ and $\Gamma \in \CC^\T$, use the physicists' notation
\begin{align*}
	\langle\Pi,\tau\rangle&\in\CC, \quad \tau\in T,  \\
	\langle\Gamma,\alpha\rangle&\in\CC, \quad \alpha\in \T.
\end{align*}
for the evaluation of $\Pi$ and $\Gamma$ on $\tau$, respectively $\alpha$ (so that $\langle\Pi,\tau\rangle$ and $\langle\Gamma,\alpha\rangle$ are simply elements of $\CC$).
We use instead, for a given $\Gamma\in\CC^\T$, the classical functional notation 
\[
\Gamma(\alpha):=(\langle\Gamma,\alpha\rangle ,\alpha)\in \CC\times\T, \qquad \alpha\in \T,
\]
to denote a {\it copy} of the function $\langle\Gamma,\alpha\rangle$ indexed by $\alpha$.
\begin{definition}\label{C_G}
For every $\Gamma\in\CC^\T$ we denote by $C_\Gamma$ the vector space freely 
generated by $(\Gamma(\alpha), \alpha\in \T)$. 
\end{definition}
By definition, $C_\Gamma$ is isomorphic to $C$ as a vector space (through the projection map $\Gamma(\alpha)\mapsto \alpha$).
We also have an {\it evaluation map} $\ev:C_\Gamma\to \CC$
\begin{equation}\label{ev}
C_\Gamma\ni \sum_{i=1}^n c_i\, \Gamma(\alpha_i) \mapsto
\ev\left(\sum_{i=1}^n c_i\, \Gamma(\alpha_i)\right):= \sum_{i=1}^n c_i\, \langle\Gamma,\alpha_i\rangle\in\CC.
\end{equation}

%Although $\Gamma(\alpha)$ is a function in $\CC$ for every $\alpha\in \T$, we do not consider $C_\Gamma$ as a subset of $\CC$: elements 
%of $C_\Gamma$ are formal linear combinations of functions in $\CC$, which are not considered as functions in $\CC$
%themselves. 

Concretely, the aim of Definition \ref{C_G} is to use deformations of the algebraic 
structure of $C$, in particular its product, and the isomorphism between $C$ and $C_\Gamma$, to transfer the deformations of $C$ to $C_\Gamma$ and ultimately to the associated functions in $\CC$. 

We insist on the fact that, in general, the way such a function (or products thereof) will be deformed will depend on its index.
A key point to keep in mind is indeed that the vector space $C_\Gamma$ is {\it not} isomorphic, in general, to the linear span of 
$(\langle\Gamma,\alpha\rangle, \alpha\in \T)$ in $\CC$. To take a trivial example,
we might choose $\langle\Gamma,\alpha\rangle=0\in\CC$ for some (or all) $\alpha\in \T$, while $\Gamma(\alpha)$ is 
always a non-zero element of $C_\Gamma$ for all $\alpha\in \T$. In practice, $C_\Gamma$ is isomorphic to the linear 
span of $(\langle\Gamma,\alpha\rangle, \alpha\in \T)$ in $\CC$ if and only if the family $(\langle\Gamma,\alpha\rangle: \alpha\in \T)$ is linearly independent in $\CC$, that is if and only if the evaluation map $\ev$ is injective.

\begin{definition}\label{lemGamma} We define the commutative and associative product $\M_{\Gamma}$ on $C_\Gamma$
as the only linear map $\M_{\Gamma}:C_\Gamma\otimes C_\Gamma\to C_\Gamma$ such that
\[
	\M_{\Gamma}(\Gamma(\alpha)\otimes\Gamma(\beta)):=\Gamma({\alpha\odot\beta}), \qquad \forall\ \alpha,\beta\in \T.
\]
\end{definition}

Then $(C_\Gamma,\M_{\Gamma},\langle\Gamma,\emptyset\rangle)$ is a commutative unital algebra. In other terms, we have extended the canonical isomorphism from $C$ to $C_\Gamma$ to an isomorphism of algebras
$(C,\odot)\to (C_\Gamma,\M_{\Gamma})$.

\begin{definition}\label{R}%\label{acgp}
 If $\Gamma\in\CC^\T$ is a unital algebra  morphism from $(\T,\odot)$ to $(\CC,\cdot)$, namely if
\begin{equation}\label{Gamma}
	\langle \Gamma,\emptyset\rangle=1, \qquad  
	\langle \Gamma,\tau_1\odot\tau_2\rangle
	=\langle\Gamma,\tau_1\rangle\cdot \langle\Gamma,\tau_2\rangle,  
\end{equation}
for all $\tau_1,\tau_2\in\T$, where $\cdot$ is the pointwise multiplication on $\CC$, then $\Gamma$ is called a \emph{character}. 
\end{definition}
\begin{definition}
Since $C$ coincides with the free algebra generated by $T$, for each map $\Pi$ in $\CC^T$ there is a unique character $R\Pi\in\CC^\T$, called the \emph{canonical lift of $\Pi$}, with
\[
	\langle\Pi,\tau\rangle=\langle R\Pi,\tau\rangle, \qquad \forall \, \tau\in T.
\]
 In particular, for every $\Pi\in \CC^T$, $\M_{R\Pi}$ is mapped by the evaluation map \eqref{ev} to the canonical pointwise product on $\CC$.
\end{definition}

\subsection{Constructing deformations}
We want now to define {\it deformations} of the products of Definition \ref{lemGamma}, taking inspiration from Section 
\ref{sect:deform}. 
 We suppose that $C$ is a left-comodule over a Hopf algebra $(\hat C,\bullet,\emptyset,\hat\Delta,\hat\varepsilon)$, with coaction $\delta:C\to \hat C\otimes C$ satisfying the analogue of \eqref{comodule}. We stress that the coaction $\delta$ is not supposed to be multiplicative with respect to the $\odot$ product in $C$.

We say that $\lambda:\hat C\to\R$ is {\it unital} if it is a linear functional such that $\lambda(\emptyset)=1$. Then we define, as in the previous section,
\begin{equation}\label{psi}
	\psi_\lambda:C\to C, \qquad \psi_\lambda:=(\lambda\otimes\id)\delta,  
\end{equation}
for every unital $\lambda:\hat C\to\R$. It is easy to see that
\[
	\psi_\lambda \circ \psi_{\lambda'} = \psi_{\lambda' \ \hat\star\  \lambda},
\]
where $\hat\star$ is the convolution product with respect to the coproduct $\hat\Delta:{\hat C} \to{\hat C}\otimes{\hat C}$. We then define the product $\odot_\lambda$ on $C$ as
\begin{equation}\label{odotl}
	\alpha \odot_\lambda \beta 
	:=\psi_\lambda^{-1}[(\psi_\lambda\alpha)\odot(\psi_\lambda\beta)], \qquad \alpha,\beta\in C,
\end{equation}
where $\psi_\lambda^{-1}=\psi_{\lambda^{-1}}$ and $\lambda^{-1}:\hat C\to\R$ is the inverse of $\lambda$ with respect to the $\hat\star$ convolution product. It is easy to see that the product $\odot_\lambda$ is associative and commutative, arguing as in the proof of  Theorem \ref{deformation}. The product $\odot_\lambda$ is in general different from $\odot$.

\begin{definition}\label{gW} By analogy with the Hopf algebraic interpretation of Wick products,
the map $\psi_\lambda^{-1}=\psi_{\lambda^{-1}}$ is called the generalized Wick $\lambda$-product map.
\end{definition}

%\begin{definition}\label{anoterdef}
%For every unital $\lambda:\hat C\to\R$ we define the $\lambda$-\emph{deformation} of a generalized product $\Gamma\in \CC^\T$ as the generalized product  $\Gamma_\lambda\in\CC^\T$
%\[
%\langle \Gamma_\lambda,\emptyset\rangle:=\langle \Gamma,\emptyset\rangle, \qquad \langle \Gamma_\lambda,\tau_1\odot\cdots\odot\tau_n\rangle:=\langle \Gamma,\tau_1\odot_\lambda\cdots\odot_\lambda\tau_n\rangle,
%\]
%for $n\geq 1$ and $\tau_1,\ldots,\tau_n\in T$.
%\end{definition}
%The definition of $\Gamma_\lambda$ is unambiguous, since $C$ is a free unital commutative algebra over $T$, so in particular products of elements of $T$ constitute a linear basis of $C$.
%We call the generalized product $\Gamma_\lambda$ a deformation of $\Gamma$ since in the case $\lambda=\hat\varepsilon$ is the counit of $\hat C$, the coaction property \eqref{comodule} of $\delta$ implies that $M_{\hat\varepsilon}$ is the identity map, hence $\Gamma_{\hat\varepsilon}$ coincides with $\Gamma$.

We can now define deformations of the product $\M_\Gamma$ on $C_\Gamma$.
\begin{definition}\label{anoterdef}
For every $\Gamma\in\CC^\T$ and every unital $\lambda:\hat C\to\R$ we can define a product $\M^\lambda_{\Gamma}$ on $C_\Gamma$ by
\begin{equation}\label{MG}
	\M^\lambda_{\Gamma}(\Gamma(\alpha)\otimes\Gamma(\beta))
	:=\Gamma({\alpha\odot_\lambda\beta}), \qquad \forall\ \alpha,\beta\in C,
\end{equation}
such that $\Gamma:(C,\odot_\lambda)\to(C_\Gamma,\M^\lambda_{\Gamma})$ is an algebra isomorphism. 
\end{definition}

We say that $\M^\lambda_{\Gamma}$ is a $\lambda$-deformation of $\M_{\Gamma}$; if $\lambda$ is the counit $\hat\varepsilon$ of $\hat C$, the counitality property \eqref{comodule} of $\delta$ implies that $\psi_{\hat\varepsilon}$ is the identity map on $C$, hence $\M^{\hat\varepsilon}_{\Gamma}$ coincides with $\M_{\Gamma}$. In particular we have

\begin{definition}
For every $\Pi\in\CC^T$ we can define a $\lambda$-{deformation} $\cdot_\lambda:=\M^\lambda_{R\Pi}$ of the canonical product $\M_{R\Pi}$ on $C_{R\Pi}$, such that
\begin{equation}\label{Pideform}
	\Pi({\tau_1})\cdot_\lambda \cdots \cdot_\lambda\Pi({\tau_n}) 
	:= R\Pi({\tau_1\odot_\lambda\cdots\odot_\lambda\tau_n}),
\end{equation}
where $\tau_1,\ldots,\tau_n\in T$.
\end{definition}
We stress again that, unlike the pointwise multiplication $\cdot$, the $\lambda$-defor\-mation $\cdot_\lambda=\M^\lambda_{R\Pi}$ is not defined on $\CC$ but rather, for every fixed $\Pi\in\CC^T$, on $C_{R\Pi}$. As stated below Definition \ref{anoterdef}, the deformation $\M^{\hat\varepsilon}_{R\Pi}$ coincides with the canonical product $\M_{R\Pi}$ on $C_{R\Pi}$.

\begin{lemma}\label{isomo}
For every unital $\lambda:\hat C\to\R$ and $\Gamma\in\CC^\T$ 
the map 
\begin{equation}\label{Gal}
	\Gamma^\lambda:C\to C_\Gamma, \qquad
	\Gamma^\lambda:=\Gamma\circ \psi_\lambda^{-1}
\end{equation}
defines an algebra isomorphism from $(C,\odot)$ to $(C_{\Gamma},\M_{\Gamma}^\lambda)$.
\end{lemma}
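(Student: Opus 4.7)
The plan is to factor $\Gamma^\lambda$ through the intermediate algebra $(C,\odot_\lambda)$ and check that each of the two factors is an algebra isomorphism; the composition is then the desired isomorphism essentially by inspection.

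First I would observe that $\psi_\lambda:C\to C$ is invertible, with inverse $\psi_\lambda^{-1}=\psi_{\lambda^{-1}}$, as this follows from the multiplicative property $\psi_\lambda\circ\psi_{\lambda'}=\psi_{\lambda'\ \hat\star\ \lambda}$ recalled just after \eqref{psi} together with the fact that $\lambda$ is invertible in the convolution algebra because $\lambda(\emptyset)=1$ (exactly as in \eqref{inverse1}). Hence $\Gamma^\lambda=\Gamma\circ\psi_\lambda^{-1}$ is at least a linear isomorphism, since $\Gamma:C\to C_\Gamma$, $\alpha\mapsto\Gamma(\alpha)$, is a linear isomorphism by the very definition of $C_\Gamma$ (Definition \ref{C_G}).

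Next I would verify that $\psi_\lambda^{-1}:(C,\odot)\to(C,\odot_\lambda)$ is an algebra morphism. This is immediate from the definition \eqref{odotl}: applying $\psi_\lambda$ to both sides of
\[
\alpha\odot_\lambda\beta:=\psi_\lambda^{-1}\bigl[(\psi_\lambda\alpha)\odot(\psi_\lambda\beta)\bigr]
\]
gives $\psi_\lambda(\alpha\odot_\lambda\beta)=(\psi_\lambda\alpha)\odot(\psi_\lambda\beta)$, which on substituting $\alpha=\psi_\lambda^{-1}u$, $\beta=\psi_\lambda^{-1}v$ yields $\psi_\lambda^{-1}(u\odot v)=\psi_\lambda^{-1}u\odot_\lambda\psi_\lambda^{-1}v$, so that $\psi_\lambda^{-1}$ is multiplicative (and sends the unit $\emptyset$ to itself, since $\psi_\lambda(\emptyset)=\lambda(\emptyset)\emptyset=\emptyset$ by the coaction axiom and $\lambda(\emptyset)=1$). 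The associativity and commutativity of $\odot_\lambda$ that we need in order to regard $(C,\odot_\lambda)$ as a commutative algebra have already been noted right after \eqref{odotl}.

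Then I would invoke Definition \ref{anoterdef}, which states precisely that $\Gamma:(C,\odot_\lambda)\to(C_\Gamma,\M^\lambda_\Gamma)$ is an algebra isomorphism, since by \eqref{MG} the product $\M^\lambda_\Gamma$ was defined so as to make this map a morphism, and it is a linear isomorphism between the underlying vector spaces. Composing the two algebra isomorphisms
\[
(C,\odot)\;\xrightarrow{\;\psi_\lambda^{-1}\;}\;(C,\odot_\lambda)\;\xrightarrow{\;\Gamma\;}\;(C_\Gamma,\M^\lambda_\Gamma)
\]
yields $\Gamma^\lambda=\Gamma\circ\psi_\lambda^{-1}$ as an algebra isomorphism, which is the claim. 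There is no real obstacle here: the whole statement is a formal consequence of the definitions \eqref{odotl} and \eqref{MG}, and the only point that is not entirely tautological is checking that $\psi_\lambda^{-1}$ intertwines $\odot$ and $\odot_\lambda$, which is a one-line computation as above.
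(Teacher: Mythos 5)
Your proof is correct and is essentially the paper's argument in expanded form: the paper's one-line identity $\M^\lambda_{\Gamma}(\Gamma^\lambda(\alpha)\otimes\Gamma^\lambda(\beta))=\Gamma^\lambda(\alpha\odot\beta)$, obtained directly from \eqref{MG} and \eqref{odotl}, is exactly the composite of your two factor morphisms $\psi_\lambda^{-1}:(C,\odot)\to(C,\odot_\lambda)$ and $\Gamma:(C,\odot_\lambda)\to(C_\Gamma,\M^\lambda_\Gamma)$. Spelling out the invertibility $\psi_\lambda^{-1}=\psi_{\lambda^{-1}}$ and the unit preservation is a harmless elaboration of what the paper leaves implicit.
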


\begin{proof}
By \eqref{MG}
\begin{equation}\label{gagal}
	\M^\lambda_{\Gamma}(\Gamma^\lambda(\alpha)\otimes\Gamma^\lambda(\beta))
	=\Gamma^\lambda({\alpha\odot\beta}), \qquad \forall\ \alpha,\beta\in C,
\end{equation}
and the claim follows.
\end{proof}

In particular for $\Gamma=R\Pi$ we obtain by \eqref{Pideform}
\begin{equation}\label{Pideforma}
	\Gamma^\lambda({\tau_1})\cdot_\lambda \cdots \cdot_\lambda\Gamma^\lambda({\tau_n}) := 
	\Gamma^\lambda({\tau_1\odot\cdots\odot\tau_n}),
\end{equation}
which is reminiscent of \eqref{wickdef}.

%\medskip
%We note now that the isomorphism of vector spaces $\Gamma^\lambda:C\to C_\Gamma$ defined in \eqref{Gal} allows 
%to generalize the Definition \ref{lemGamma} of the product $\M_\Gamma$ on $C_\Gamma$.
%\begin{definition}\label{lemGamma2}
%For every $\Gamma\in\CC^\T$ and every unital $\lambda:\hat C\to\R$ we define the commutative and associative product $\M_{\Gamma,\lambda}$ on $C_\Gamma$
%as $\M_{\Gamma,\lambda}:C_\Gamma\otimes C_\Gamma\to C_\Gamma$,
%\[
%	\M_{\Gamma,\lambda}(\Gamma^\lambda(\alpha)\otimes\Gamma^\lambda(\beta)):=\Gamma^\lambda({\alpha\odot\beta}), \qquad \forall\ \alpha,\beta\in C.
%\]
%\end{definition}
%Since $\Gamma^\lambda:C\to C_\Gamma$ is an isomorphism of vector spaces,
%by \eqref{gagal} we see that the Definitions \ref{anoterdef} and \ref{lemGamma2} yield the same deformed product, 
%namely
%\[
%\M_{\Gamma,\lambda} = \M_\Gamma^\lambda.
%\]

\begin{example}\label{ex:81}{\rm
In the setting of the previous sections, we can consider $T=\CA$ with $\CC=\CC(\R^\CA)$, so that in this case, up to a canonical isomorphism,
$C=H$ and $\hat C=\hat H$. Then the most natural choice of  $\Pi\in{\rm Hom}(\CA,\CC(\R^\CA))$ is given by $<\Pi,a>:=t_a$, where $t_a:\R^\CA\to\R$ is the evaluation of the $a$-component, and $R\Pi:H\to \CC$ is 
\[
	\langle R\Pi ,x_{a_1}\ldots x_{a_n}\rangle:=t_{a_1}\cdots t_{a_n}, \qquad a_1,\ldots,a_n\in\CA,
\]
Then \eqref{Pideforma} is the analogue of \eqref{wickdef} in this context, while \eqref{Pideform} defines a deformation $\cdot_\lambda$ of the pointwise product of $t_{a_1},\ldots ,t_{a_n}$. This point of view will be generalized in the next section.
}
\end{example}

We conclude this section with a remark. The above construction allows to construct families of deformed products
on a vector space $C_\Gamma$.
%, which contains a family of functions $(\Pi(\tau), \tau\in T)$ in $\CC$.
In the general case, the outcome of the product between $\Pi(\tau_1)$ and $\Pi(\tau_2)$ is not a function in $\CC$ but an element of $C_\Gamma$, namely a 
formal linear combination of functions in $\CC$ indexed by elements of $\T$. One may prefer a genuine function in $\CC$ as an outcome. 

If the family $(\langle\Gamma,\alpha\rangle: \alpha\in \T)$ is linearly independent in $\CC$, then we can canonically embed 
$C_\Gamma$ in $\CC$ by means of the evaluation map \eqref{ev} and the product 
\[
\ev\circ\M^\lambda_\Gamma(\ev^{-1}\cdot,\ev^{-1}\cdot):\ev(C_\Gamma)\otimes\ev(C_\Gamma)\to \ev(C_\Gamma)
\]
is then a commutative and associative product on the linear subspace of $\CC$ spanned by $(\langle\Pi,\tau\rangle, \tau\in T)$.

In the general case, one can still compute $\ev\circ\M^\lambda_\Gamma$, which indeed belongs to $\CC$, and obtain a linear map 
\[
\ev\circ\M^\lambda_\Gamma: C_\Gamma\otimes C_\Gamma\to\CC.
\]
In this case, $\ev\circ\M^\lambda_\Gamma$ 
is a weaker notion than a genuine product. We refer to the discussion on "multiplication" in regularity structures at the
beginning of \cite[Section 4]{reg}.

%%%%%%%%%%%%%%%%%%%%%%%%%%%%%%%%%
%%%%%%%%%%%%%%%%%%%%%%%%%%%%%%%%%

\section{Wick products of trees}
\label{sect:Wicktrees}

We now discuss the main example we have in mind of the general construction in Section \ref{sect:pointwise}, namely rooted trees, that are a generalization of classical monomials as we show below. With the application to rough paths in mind \cite{Gubinelli2010693}, we denote by $\T$ the set of all non-planar non-empty rooted trees with edges (not nodes) decorated with letters from a finite alphabet $\{1,\ldots,d\}$. We stress that all trees in $\T$ have at least one node, the root.

The set $\T$ is a commutative monoid under the associative and commutative {\it tree product} $\odot$ given by the identifications of the roots, e.g. 
\begin{equation}\label{planted}
	\Forest{[[i_1[i_2]][i_3]]}\ \odot\tIV{i_4}{i_5}{i_6} \ =\  \Forest{[[i_1[i_2]][i_3][i_4[i_5][i_6]]]},
\end{equation}
see also \cite[Definition 4.7]{bhz}.

The rooted tree $\tRoot$ with a single node and no edge is the neutral element for this product.
The set of monomials in $d$ commuting variables $X_1,\ldots,X_d$ can be embedded in $\T$ as follows: every
primitive monomial $X_i$ is identified with $\tI{i}$, and the product of monomials with the tree product. In this way
every monomial is identified with a decorated corolla, for instance
\begin{equation}\label{Cor}
	X_{i}X_j X_k \ \longrightarrow\ \Forest{[[i][j][k]]}.
\end{equation}
See the discussion around Lemma \ref{isoCor} below for more on this identification.

We denote by $T\subset\T$ the set of all non-planar {\it planted} rooted trees. We recall that a 
rooted tree is planted if its root belongs to a single edge, called the trunk. For example, in the left-hand side of \eqref{planted}, the first tree is not planted, while the second is.

We also denote by $\F$ the set of non-planar rooted forests with edges (not nodes) decorated with letters from the finite alphabet $\{1,\ldots,d\}$, such that every non-empty connected component has at least one edge. On this space we define the product $\bullet$ given by the disjoint union, with neutral element the empty forest $\emptyset$.

%We can identify $\emptyset$ with the 
%rooted tree $\tRoot$ having a single node and no edge, and the product $\odot$ with the tree 
%product described 
%above; then $C$ is canonically isomorphic to the vector space freely generated by $\T$.

We perform the identification
\begin{equation}\label{ident}
	\tRoot=\emptyset
\end{equation}
between the rooted tree $\tRoot\in\T$ and the empty forest $\emptyset\in\F$. Then we obtain canonical embeddings 
\begin{equation}\label{embed}
	T\hookrightarrow\T \hookrightarrow \F
\end{equation}
and moreover
\begin{itemize}
\item $(\T,\odot)$ is the free commutative monoid on $T$, 
\item $(\F,\bullet)$ is the free commutative monoid on $\T$.
\end{itemize}
In both cases the element $\tRoot=\emptyset$ is the neutral element.
We denote by 
\begin{itemize}
\item $V$ the vector space generated freely by $T$, 
\item $C$ the vector space generated freely by $\T$,
\item $\hat C$ the vector space generated freely by $\F$.
\end{itemize}
Then we have 
\begin{itemize}
\item $(C,\odot)$ is the free commutative unital algebra generated by $T$, 
\item $(\hat C,\bullet)$ is the free commutative unital algebra generated by $\T$,
\end{itemize}
and again in both cases the element $\tRoot=\emptyset$ is the neutral element. Finally, by \eqref{ident} and \eqref{embed}
we also have canonical embeddings 
\begin{equation}\label{embed2}
V\hookrightarrow C \hookrightarrow \hat C.
\end{equation}

On $\hat C$ we also define the coproduct $\hat\Delta$, given by the extraction-contraction operator of arbitrary subforests \cite{CEFM11}:
\begin{equation}\label{hatDelta}
\hat\Delta \tau = \sum_{\sigma\subseteq\tau} \sigma\otimes \tau/\sigma, \qquad \tau\in\F,
\end{equation}
where a subforest $\sigma\in\F$ of $\tau$ is determined by a (possibly empty) subset of the set 
of edges of $\tau$, and $\tau/\sigma$ is the tree obtained by contracting each connected 
component of $\sigma$ to a single node. We recall that by \eqref{ident} the empty forest and the tree reduced
to a single node are identified and called $\emptyset$.
For example, 
\begin{align*}
  \hat\Delta\Forest{[ [i_1[i_2]] [i_3] ]} &= \Forest{[ [i_1[i_2]] [i_3] ]}\otimes\emptyset + \emptyset\otimes \Forest{[ [i_1[i_2]] [i_3] ]} + \tI{i_1}\otimes\tV{i_2}{i_3}+\tI{i_2}\otimes\tV{i_1}{i_3}+
  \tI{i_3}\otimes\tII{i_1}{i_2}\\
  &\qquad+\tV{i_1}{i_3}\otimes\tI{i_2}+\tII{i_1}{i_2}\otimes\tI{i_3}+\tI{i_2}\bullet\tI{i_3}\otimes\tI{i_1}.
\end{align*} 
If $\hat\varepsilon:\hat C\to\R$ is the linear functional such that $\hat\varepsilon(\sigma)=\ind{(\sigma=\emptyset)}$ for $\sigma\in\F$, then $(\hat C,\bullet,\emptyset,\hat\Delta,\hat\varepsilon)$ is a Hopf algebra \cite{CEFM11}. 

Note that, unlike $(\hat H,\hat\Delta)$ in Section \ref{sect:inverse}, $(\hat C,\hat\Delta)$ is not co-commutative; moreover the canonical embedding $C \hookrightarrow \hat C$ in \eqref{embed2} is not an algebra morphism from $(C,\odot)$ to $(\hat C,\bullet)$. We could also endow $C$ with a coproduct $\Delta_C$ (the extraction-contraction operator of a subtree at 
the root, which plays an important role in \cite{bhz} and is isomorphic to the classical Butcher--Connes--Kreimer coproduct), 
but we do not need this for what comes next.

\bigskip
We now go back to the construction of Section \ref{sect:pointwise}. With the embedding $C \hookrightarrow \hat C$, the coaction 
\[
    \delta:C\to\hat C \otimes C, \qquad
    \delta(\tau):=\hat\Delta\tau,
\]
makes $C$ a left-comodule over $\hat C$ by an analogue of Proposition \ref{com2}. Then we can define $\psi_\lambda:C\to C$ as in \eqref{psi}, for $\lambda:\hat C\to\R$ unital, and a deformed product $\odot_\lambda$ on $C$ as in \eqref{odotl} which is in general truly different from $\odot$.

For $\Pi\in\CC^T$ and $\Gamma:=R\Pi\in\CC^\T$ as in Definition \ref{R}, the map $\Gamma^\lambda=(R\Pi)\circ \psi_{\lambda^{-1}}$ defines by Lemma \ref{isomo} an algebra isomorphism from $(C,\odot)$ to $(C_{\Gamma},\M_{\Gamma}^\lambda)$, so that in particular we have the analogue of \eqref{Pideforma}.

This idea is very important in regularity structures, where the pointwise product of explicit (random) distributions is ill-defined, while a suitable deformed product is well-defined as a (random) distribution. The above construction allows to recover a precise algebraic structure of such deformed pointwise products, in the same spirit as Theorem \ref{ww}. See Section \ref{sect:final} below for a discussion. 

\medskip
We show now how these ideas can be implemented concretely, that is how a character $\lambda$ can be constructed in practice in some interesting situation, generalizing the construction of Wick polynomials in the previous sections of this article.

\bigskip
Let us now consider a $\CC^T$-valued random variable $X$, such that
\begin{itemize}
\item $\langle X,\emptyset\rangle =1$,

\item $X$ is stationary, i.e., $\langle X,\tau\rangle(\cdot+x)$ has the same law as $\langle X,\tau\rangle$ for all $x\in\R^d$ and $\tau\in T$

\item $\langle X,\tau\rangle(0)$ has finite moments of any order for all $\tau\in T$.

\end{itemize}

Then we can define 
\begin{equation}\label{muep}
	\mu :C\to\R, \qquad \mu(\tau):=\E(\langle RX,\tau\rangle(0)).
\end{equation}
There is a unique extension of $\mu$ to a linear $\hat\mu:\hat C\to\R$ which is a character of $(\hat C,\bullet)$ and we denote by $\hat\mu^{-1}:\hat C\to\R$ its inverse with respect to the $\hat\star$ convolution product. 

\begin{theorem}\label{thm92}
  Let $X$ be as above. The map $\lambda=\hat\mu^{-1}$ is the unique character on $(\hat C,\bullet)$ such that
\[
	\E(\langle RX, \psi_{\lambda}\tau\rangle(0))=0, \qquad \forall\, \tau\in \T\setminus\{\emptyset\}.
\]
\end{theorem}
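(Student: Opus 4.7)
The plan is to recognize the asserted identity $\E(\langle RX,\psi_\lambda\tau\rangle(0))=0$ as an equation in the character group of $\hat C$. Since $\psi_\lambda\tau=(\lambda\otimes\id)\hat\Delta\tau$ lies in $C$, and since by construction $\hat\mu|_C=\mu$ with $\mu(\sigma)=\E(\langle RX,\sigma\rangle(0))$, one has
\[
  \E(\langle RX,\psi_\lambda\tau\rangle(0)) \;=\; \hat\mu(\psi_\lambda\tau)
  \;=\; (\lambda\otimes\hat\mu)\hat\Delta\tau \;=\; (\lambda\,\hat\star\,\hat\mu)(\tau).
\]
Thus the vanishing condition of the theorem is equivalent to the statement that $\lambda\,\hat\star\,\hat\mu$ vanishes on every non-empty decorated tree $\tau\in\T\setminus\{\emptyset\}$. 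This reduction is the main algebraic content of the proof; everything else follows from general Hopf-algebraic formalism.

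Next, I would use that $\hat C$ admits an antipode, so the set of characters on $(\hat C,\bullet)$ is a group under $\hat\star$; in particular if $\lambda$ is a character, so is $\lambda\,\hat\star\,\hat\mu$. Now $\hat C$ is the free commutative unital algebra generated by $\T$, hence a character on $\hat C$ is entirely determined by its values on $\T$. Since the counit $\hat\varepsilon$ is a character vanishing on every non-empty tree (and any character sends $\emptyset$ to $1$), the only character vanishing on $\T\setminus\{\emptyset\}$ is $\hat\varepsilon$ itself. The reduction of the previous paragraph then forces $\lambda\,\hat\star\,\hat\mu=\hat\varepsilon$, which by uniqueness of inverses in the character group yields $\lambda=\hat\mu^{-1}$. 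The converse is immediate: if $\lambda=\hat\mu^{-1}$, then $\lambda\,\hat\star\,\hat\mu=\hat\varepsilon$ vanishes on all non-empty trees, so a fortiori on $\T\setminus\{\emptyset\}$.

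There is essentially no obstacle once the translation $\hat\mu\circ\psi_\lambda=\lambda\,\hat\star\,\hat\mu$ on $C$ is in hand; the only care required is to remember that $\hat\Delta\tau\in\hat C\otimes C$ for $\tau\in\T$ (because contracting connected components of a subforest of a tree still yields a tree), which is precisely what makes $\delta=\hat\Delta|_C$ a comodule structure and allows one to pair with $\hat\mu$ on the right-hand leg. The non-cocommutativity of $\hat\Delta$ is harmless here because the conclusion only uses multiplicativity of $\lambda\,\hat\star\,\hat\mu$ on the generators of $\hat C$.
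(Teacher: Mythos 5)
Your proof is correct and follows essentially the same route as the paper: reduce the vanishing condition to $(\lambda\,\hat\star\,\hat\mu)(\tau)=\hat\varepsilon(\tau)$ on $\T$, then use multiplicativity of characters on the free commutative algebra $\hat C$ to extend this to all of $\F$ and conclude $\lambda=\hat\mu^{-1}$. No gaps to report.
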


\begin{proof}
We note first that for every character $\lambda$ on $(\hat C,\bullet)$ we have
\[
	\E(\langle RX, \psi_{\lambda}\tau\rangle(x))
	=(\lambda\otimes\mu)\hat\Delta\tau
	=(\lambda\ \hat\star\ \mu)\, \tau, \qquad \forall \, \tau\in \T. 
\]
In particular 
\[
	\E(\langle RX, \psi_{\hat\mu^{-1}}\tau\rangle(x))
	=(\hat\mu^{-1}\otimes\hat\mu)\hat\Delta\tau
	=0, \qquad \forall\, \tau\in \T\setminus\{\emptyset\}.
\]
On the other hand, since $\lambda$ and $\hat\mu$ are characters on $(\hat C,\bullet)$, if for all 
$\tau\in \T$
\[
	(\lambda\ \hat\star\ \hat\mu)\,\tau=\ind{(\tau=\emptyset)},
\]
then the same formula holds by multiplicativity for all $\tau\in\F$ and we obtain that $\lambda= \hat\mu^{-1}$.
\end{proof}

\begin{remark}{\rm
By stationarity, the function $RX\circ \psi_{\hat\mu^{-1}}\in\CC^\T$ has the additional property
\[
	\E(\langle RX, \psi_{\hat\mu^{-1}}\tau\rangle(x))=0, \qquad \forall\, \tau\in \T\setminus\{\emptyset\}, \
	x\in\R^d.
\]
In other words, $RX\circ \psi_{\hat\mu^{-1}}:C\to \CC$ gives a {\it centered} deformed product. This important
example is the exact analogue in this context of the {\it BPHZ renormalization} in regularity structures, see \cite[Theorem 6.17]{bhz}.
}
\end{remark}

We now show that the construction on decorated rooted trees generalizes in a very precise sense the Wick products of Section \ref{sect:wick}. We use the identification between monomials in $d$ commuting variables $X_1,\ldots,X_d$ and corollas decorated with letters from $\{1,\ldots,d\}$ that we have explained in \eqref{Cor}. Choosing $\CA:=\{1,\ldots,d\}$ we obtain a canonical embedding of $H\hookrightarrow C$, where $H$ is defined in Definition \ref{def:H}; we call ${\rm Cor}$ the image of $H$ in $C$ by this embedding. Then a simple verification shows that 

\begin{lemma}\label{isoCor}
The embedding $H\hookrightarrow C$ is a Hopf algebra isomorphism between $(H,\cdot,\emptyset,\Delta,\varepsilon)$ and $({\rm Cor},\odot,\tRoot\,,\hat\Delta,\hat\varepsilon)$, where $\hat\Delta$ is defined in \eqref{hatDelta}.
\end{lemma}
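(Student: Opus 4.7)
The plan is to verify that the embedding $\iota: H \hookrightarrow C$ separately respects the product, the coproduct, and the counit. Because both sides are connected graded bialgebras, this will automatically force the antipodes to correspond, yielding a Hopf algebra isomorphism.

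First I would observe that $\iota$ is an algebra isomorphism $(H,\cdot,\emptyset) \to ({\rm Cor},\odot,\tRoot)$: on the basis, $\iota$ sends each multiset $B \in \MS(\CA)$ to the corolla $c_B$ whose root is adjacent to exactly $B(a)$ edges decorated $a$, for every $a \in \CA$. Multiplicativity is immediate, namely $c_{B_1} \odot c_{B_2} = c_{B_1 \cdot B_2}$, by the very definition of the tree product (join at roots); and the unit $\emptyset \in H$ is sent to $\tRoot$.

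The core step is to show $\hat\Delta \circ \iota = (\iota \otimes \iota) \circ \Delta$. The crucial combinatorial observation is that in a corolla all edges are incident to a common root vertex, so any subset of edges together with its endpoints forms a \emph{connected} subgraph, itself a corolla. Thus a subforest of $c_B$ specified by choosing $B_1(a)$ of the $B(a)$ edges labeled $a$ (for each $a$) is exactly the corolla $c_{B_1}$, and the contraction $c_B/c_{B_1}$ collapses this smaller corolla to a single vertex, yielding $c_{B_2}$ with $B_2 = B - B_1$. The number of edge subsets realizing a fixed split $B = B_1 \cdot B_2$ is the multinomial coefficient $\binom{B}{B_1\,B_2} = \prod_a \binom{B(a)}{B_1(a)}$, hence
\[
\hat\Delta(\iota B) \;=\; \hat\Delta c_B \;=\; \sum_{B_1 \cdot B_2 = B} \binom{B}{B_1\,B_2}\, c_{B_1} \otimes c_{B_2} \;=\; (\iota \otimes \iota)\,\Delta B,
\]
matching the coproduct \eqref{coprodH} on $H$. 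As a byproduct this shows $\hat\Delta({\rm Cor}) \subseteq {\rm Cor} \otimes {\rm Cor}$, so ${\rm Cor}$ is genuinely a sub-coalgebra of $\hat C$.

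Finally, the counits agree: $\hat\varepsilon(c_B) = \ind{(c_B = \emptyset)}$ coincides with $\varepsilon(B) = \ind{(B = \emptyset)}$ under the identification $\tRoot = \emptyset$. So $\iota$ is a bialgebra isomorphism between connected graded bialgebras, and the antipode in such a bialgebra is uniquely determined by $S \star \id = \emptyset\,\varepsilon$; hence $\iota$ is automatically a full Hopf algebra isomorphism. I expect no genuine obstacle beyond formalising the combinatorial identification in the coproduct step: the only nontrivial point is the observation that a subforest of a corolla is again a corolla, which is precisely what allows $\hat\Delta$ to restrict nicely to ${\rm Cor}$ and meet $\Delta$ there.
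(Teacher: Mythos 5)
Your proof is correct and follows exactly the direct verification the paper intends (it states the lemma with ``a simple verification shows that'' and gives no further argument): you check that corollas multiply to corollas under $\odot$, that a subforest of a corolla is again a corolla so the extraction--contraction coproduct \eqref{hatDelta} reproduces the binomial coproduct \eqref{coprodH} with the correct multiplicities $\binom{B}{B_1\,B_2}$, and that counits match, with the antipode handled automatically by connectedness and gradedness. No gap; this is essentially the same approach as the paper.
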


We obtain that every deformation $\odot_\lambda$ for a unital $\lambda:\hat C\to\R$ defines a product on ${\rm Cor}$ which is isomorphic to the deformed product defined in \eqref{eq:deformation} by restricting $\lambda$ to a map from ${\rm Cor}$ to $\R$.

\section{Connection with regularity structures}
\label{sect:final}

It would go beyond the scope of this work to introduce and explain the algebraic and combinatorial aspects of seminal theory of regularity structures \cite{reg}; we want at least to explain how the concept of {\it renormalization}, which plays such a prominent role there, is intimately related to the {\it deformation} of the standard pointwise product described in the previous sections. These ideas can also be found in the theory of {\it rough paths} \cite{Lyons98,MR2314753,Gubinelli200486,Gubinelli2010693}, which has largely inspired the theory of regularity structures. 

We denote by ${\mathcal D}'(\R^d)$ the classical space of distributions or generalized functions on $\R^d$.
The recent papers \cite{bhz,Hai-Cha} introduce a Hopf algebra $\hat H$ together with a linear space $H$, which is moreover a left-comodule over $\hat H$ with coaction $\delta:H\to\hat H\otimes H$. This framework is then used to describe in a compact way a number of complicated algebraic operations, related to the concept of {\it renormalization}. The space $H$ in \cite{bhz} is an expanded version of the linear span of decorated rooted trees $V$ defined in Section \ref{sect:Wicktrees} above; more precisely it is the vector space freely generated by a more complicated set of decorated rooted trees, which is aimed at representing monomials of generalized Taylor expansions. The space $\hat H$ in \cite{bhz} is a Hopf algebra of decorated forests with a condition of {\it negative homogeneity}. 

In \cite{bhz,Hai-Cha}, the linear space $H$ codes random distributions, which depend on a regularisation parameter $\epsilon > 0$. As one removes the regularisation by letting $\epsilon \to 0$, these random distributions do not converge in general. More precisely, we have (random) linear functions $\Pi_\epsilon:H\to{\mathcal D}'(\R^d)$ which are well defined for all $\epsilon>0$, but for which there is in general no limit as $\epsilon\to 0$. In fact, we even have $\Pi_\epsilon:H\to\CC(\R^d)$, and $\Pi_\epsilon$ is constructed in a multiplicative way as in Lemma \ref{R} above. Indeed, although $H$ is not an algebra, it is endowed with a {\it partial product}, i.e., some but not all pairs of its elements are supposed to be multiplied. We try to make this idea more precise in the next 

\begin{definition}
A \emph{partial product} on $H$ is a pair $(\M,S)$ where $S\subseteq H\otimes H$ is a linear space and $\M:S\to H$ is a linear function.
\end{definition}

Therefore, if $\tau$ and $\sigma$ are elements of $H$, their product $\M(\tau\otimes\sigma)$ is well defined if and only if $\tau\otimes\sigma\in S$. For example, in regularity structures one has an element $\Xi\in H$ such that $\Pi_\epsilon\Xi = \xi_\epsilon:=\rho_\epsilon*\xi$, where $\xi$ is a white noise on $\R^d$ (a random distribution in ${\mathcal D}'(\R^d)$) and $(\rho_\epsilon)_{\epsilon>0}$ is a family of mollifiers. Although $(\xi_\epsilon)^2$ is well-defined as a pointwise product in $\CC(\R^d)$, as $\epsilon\to0$ there is no limit in ${\mathcal D}'(\R^d)$ and indeed, we do not expect to multiply $\xi$ by itself in ${\mathcal D}'(\R^d)$. We express this by imposing that $\Xi\otimes\Xi\notin S$.

The divergences that arise in this context are due to ill-defined products; this is already clear in the example of $\Xi\otimes\Xi$ and $(\xi_\epsilon)^2$. Another more subtle example is the following: we consider $\xi_\epsilon:=\rho_\epsilon*\xi$ again, and a (possibly random) function $f_\epsilon:\R^d\to R$ which, as $\epsilon \to 0$, tends to a non-smooth function $f$. Then the pointwise product $f_\epsilon\cdot \xi_\epsilon$ does not converge in general, since the product $f\cdot \xi$ is ill-defined in ${\mathcal D}'(\R^d)$. However a proper deformation of this pointwise product may still
be well defined in the limit.

Let $(\tau_i, i\in I)\subset H$ be a family that freely generates $H$ as a linear space. We can now give the following 

\begin{definition}\label{Pip}
Let $\Pi:(\tau_i, i\in I)\to{\mathcal D}'(\R^d)$ be a map and $(\M,S)$ a {partial product} on $H$. We define $C_\Pi$ as the vector space freely generated by  the symbols $(\Pi(\tau_i), i\in I)$ as in Definition \ref{C_G}, and $\Pi:H\to C_\Pi$ the unique linear extension of $\tau_i\mapsto\Pi(\tau_i)\in C_\Pi$. Then we define a {partial product} on $C_\Pi$ as follows:
\begin{itemize}
	\item $S_\Pi\subseteq C_\Pi\otimes C_\Pi:=\{\Pi(\tau)\otimes\Pi(\sigma): \tau\otimes\sigma\in S\}$\\[-0.2cm]

	\item $\M_\Pi:S_\Pi\to C_\Pi$, $\M_\Pi(\Pi(\tau)\otimes\Pi(\sigma)):=\Pi({\M(\tau\otimes\sigma)})$.
\end{itemize}
%Finally we define an evaluation map $\ev:C_\Pi\to{\mathcal D}'(\R^d)$ as in \eqref{ev}
%\begin{equation}\label{ev2}
%C_\Pi\ni \sum_{i=1}^n c_i\, \Pi(\tau_i) \mapsto
%\ev\left(\sum_{i=1}^n c_i\, \Pi(\tau_i)\right)= \sum_{i=1}^n c_i\, \langle\Pi,\tau_i\rangle\in{\mathcal D}'(\R^d).
%\end{equation}
\end{definition}

We are clearly inspired by
the construction of the previous sections, by realizing that we can work on distributions rather than on continuous functions. We stress that this definition allows to define partial products of distributions in a very general setting. 

However the construction of interesting $\Pi:(\tau_i, i\in I)\to{\mathcal D}'(\R^d)$ may not be simple. The method which is successfully used in a large class of applications in \cite{reg,bhz,Hai-Cha} is the following. We start from a $\Pi_\epsilon:(\tau_i, i\in I)\to\CC(\R^d)$ which is multiplicative in the sense that
\[
	\langle\Pi_\epsilon, \M(\tau\otimes\sigma)\rangle
	=\langle\Pi_\epsilon,\tau\rangle\cdot\langle\Pi_\epsilon,\sigma\rangle, \qquad \forall \, \tau\otimes\sigma\in S,
\]
where $\cdot$ is the standard pointwise product in $\CC(\R^d)$. In order to obtain a convergent limit as $\epsilon\to 0$, we try to deform this pointwise product, using the comodule structure of $H$ over $\hat H$. For all unital multiplicative and linear $\lambda :\hat H\to\R$ we define $\psi_{\lambda}:H\to H$ as in \eqref{psi} and then we set as in \eqref{Gal}
\[
	\Pi^{\lambda}_\epsilon(\tau):= \Pi_\epsilon(\psi_{\lambda}^{-1}\tau), \qquad \tau\in H.
\]
Then we can define the deformed partial product on $C_{\Pi_\epsilon}$:
\[
	\M_{\Pi_\epsilon}^{\lambda}(\Pi^{\lambda}_\epsilon(\tau)\otimes \Pi^{\lambda}_\epsilon(\sigma)):=
	\Pi^{\lambda}_\epsilon(\M(\tau\otimes\sigma)), \qquad \tau\otimes\sigma\in S.
\]
If $\lambda=\lambda_\epsilon$ is chosen in such a way that $\Pi^{\lambda_\epsilon}_\epsilon$ converges to a well defined map $\hat\Pi:(\tau_i, i\in I)\to{\mathcal D}'(\R^d)$, then we can define on $C_{\hat\Pi}$ the partial product
\[
	\M_{\hat\Pi}(\hat\Pi(\tau)\otimes\hat\Pi(\sigma)):=
	\hat\Pi(\M(\tau\otimes\sigma)), \qquad \tau\otimes\sigma\in S
\]
which is the analogue of \eqref{Pideforma} in this setting. We note that in general neither $\Pi_\epsilon$ nor $\lambda_\epsilon$ converge; indeed, $\lambda_\epsilon$ diverges exactly in a way that compensates the divergence of $\Pi_\epsilon$, in such a way that $\Pi^{\lambda_\epsilon}_\epsilon$ converges.

The fact that the above construction can indeed be implemented in a large number of interesting situations is the result of \cite{bhz,Hai-Cha}. Those papers consider random maps $\Pi_\epsilon$ with suitable properties which resemble those of $X$ in Theorem \ref{thm92}, namely $\Pi_\epsilon$ is supposed to be stationary and to possess finite moments of all orders. Then, as in Theorem \ref{thm92}, it is possible to choose a specific element $\lambda_\epsilon:\hat H\to\R$ which yields a {\it centered} family of functions $\Pi_\epsilon\circ \psi_{\lambda_\epsilon^{-1}}$, see \cite[Theorem 6.17]{bhz}. Under very general conditions, this special choice produces a converging family as $\epsilon\to0$ \cite{Hai-Cha}. 

Therefore the renormalized (converging) random distributions are a {\it centered} version of the original (non-converging) ones. The specific functional $\lambda_\epsilon^{-1}$ is equal to $\mu_\epsilon\circ{\mathcal A}$, where $\mu_\epsilon:\hat H\to\R$ is an expectation with respect to $\Pi_\epsilon$ as in \eqref{muep}, and ${\mathcal A}$ is a {\it twisted antipode}; the functional $\mu_\epsilon\circ{\mathcal A}$ plays the role which is played by $\hat\mu^{-1}$ in Theorem \ref{thm92}.

\begin{remark}{\rm
We stress that the centered family $\Pi_\epsilon\circ \psi_{\lambda_\epsilon^{-1}}$ can not be in general reduced to the Wick polynomials of Theorem \ref{wick:repr}. This is because the coaction $\delta:H\to\hat H\otimes H$ in this context is significantly more complex than \eqref{coprodH} and \eqref{hatDelta}.
}
\end{remark}

\bibliography{mybib}
\bibliographystyle{plain}
\vspace{-1.5ex}
\end{document}